\title{Proper connection number and\\ 2-proper connection number of a
graph\thanks{Supported by NSFC No.11371205 and PCSIRT.} }
\author{\small{Fei Huang, Xueliang Li, Shujing Wang}\\
{\small  Center for Combinatorics and LPMC-TJKLC}\\
{\small Nankai University, Tianjin 300071, China}\\\makeatletter
{\small Email: huangfei06@126.com; lxl@nankai.edu.cn;
\newcommand\figcaption{\def\@captype{figure}\caption}
wang06021@126.com} }
\newcommand\tabcaption{\def\@captype{table}\caption}
\date{}\makeatother
\newtheorem{theorem}{Theorem}[section]
\newtheorem{defi}{Definition}[section]
\newtheorem{lemma}[theorem]{Lemma}
\newtheorem{pro}[theorem]{proposition}
\newtheorem{coro}[theorem]{Corollary}
\begin{document}

\maketitle

\begin{abstract}
A path in an edge-colored graph is called a proper path if no two
adjacent edges of the path are colored with one same color. An
edge-colored graph is called $k$-proper connected if any two
vertices of the graph are connected by $k$ internally pairwise 
vertex-disjoint proper paths in the graph. The $k$-proper connection
number of a $k$-connected graph $G$, denoted by $pc_k(G)$, is
defined as the smallest number of colors that are needed in order to
make $G$ $k$-proper connected. For $k = 1$, we write $pc(G)$ other
than $pc_1(G)$, and call it the proper connection number of $G$. In
this paper, we present an upper bound for the proper connection
number of a graph $G$ in terms of the minimum degree of $G$, and
give some sufficient conditions for a graph to have $2$-proper
connection number two. Also, we investigate the proper connection
numbers of dense graphs.
\\[2mm]

\noindent{\bf Keywords:} proper path, proper connection number,
2-proper connection number, dense graph

\noindent{\bf AMS subject classification 2010:} 05C15, 05C35, 05C38,
05C40.

\end{abstract}

\section{Introduction}

In this paper we are concerned with simple connected finite graphs.
We follow the terminology and the notation of Bondy and Murty
\cite{Bondy}. For a graph $G=(V,E)$ and two disjoint subsets $X$ and
$Y$ of $V$, denote by $B_G[X,Y]$ the bipartite subgraph of $G$ with
vertex set $X\cup Y$ and edge set $E(X,Y)$, where $E(X,Y)$ is the
set of edges of $G$ that have one end in $X$ and the other in $Y$. A
graph is called pancyclic if it contains cycles of all lengths $r$
for $3\le r\le n$. The join $G_1\vee G_2 $ of two edge-disjoint
graphs $G_1$ and $G_2$ is obtained by adding edges from each vertex
in $G_1$ to every vertex in $G_2$.

An edge-coloring of a graph $G$ is an assignment $c$ of colors to
the edges of $G$, one color to each edge of $G$. If adjacent edges
of $G$ are assigned different colors by $c$, then $c$ is a
\emph{proper (edge-)coloring}. For a graph $G$, the minimum number of colors needed
in a proper coloring of $G$ is referred to as the \emph{chromatic
index} of {\it edge-chromatic number} of $G$ and denoted by $\chi'(G)$. A path in an edge-colored
graph with no two edges sharing the same color is called a
\emph{rainbow path}. An edge-colored graph $G$ is said to be
\emph{rainbow connected} if every pair of distinct vertices of $G$
is connected by at least one rainbow path in $G$. Such a coloring is
called a \emph{rainbow coloring} of the graph. For a connected graph $G$, 
the minimum number of colors needed in a rainbow coloring of $G$ is referred to as 
the \emph{rainbow connection number} of $G$ and denoted by $rc(G)$. 
The concept of rainbow coloring was first introduced by Chartrand et al.
in \cite{Chartrand}. In recent years, the rainbow coloring has been
extensively studied and a variety of nice results have been obtained, 
see \cite{Chandran,Chartrand2,Kri,Li,Li3} for examples. For more details
we refer to a survey paper \cite{Li1} and a book \cite{Li2}.

Inspired by the rainbow coloring and proper coloring of graphs, 
Andrews et al. \cite{Andrews} introduced the concept of proper-path coloring. 
Let $G$ be an edge-colored graph, where adjacent edges may be colored with the  
same color. A path in $G$ is called a \emph{proper path} if no two adjacent 
edges of the path are colored with a same color. An edge-coloring $c$ is a 
\emph{proper-path coloring} of a connected graph $G$ if every pair of 
distinct vertices $u,v$ of $G$ is connected by a proper $u-v$ path in 
$G$. A graph with a proper-path coloring is said to be \emph{proper connected}. 
If $k$ colors are used, then $c$ is referred to as a \emph{proper-path 
$k$-coloring}. An edge-colored graph $G$ is {\it $k$-proper connected} if any 
two vertices are connected by $k$ internally pairwise vertex-disjoint 
proper paths. For a $k$-connected graph $G$, the \emph{$k$-proper connection number}
of $G$, denoted by $pc_k(G)$, is defined as the smallest number 
of colors that are needed in order to make $G$ $k$-proper connected. 
Clearly, if a graph is $k$-proper connected, then it is also $k$-connected. 
Conversely, any $k$-connected graph has an edge-coloring that makes it 
$k$-proper connected; the number of colors is easily bounded by the 
edge-chromatic number which is well known to be at most $\Delta(G)$ 
or $\Delta(G)+1$ by Vizing¡¯s Theorem \cite{Vizing} (where $\Delta(G)$, 
or simply $\Delta$, is the maximum degree of $G$). Thus $pc_k(G)\le \Delta(G)+ 1$ 
for any $k$-connected graph $G$. For $k = 1$, we write $pc(G)$ other than $pc_1(G)$, and 
call it the \emph{proper connection number} of $G$.

Let $G$ be a nontrivial connected graph of order $n$ and size $m$.
Then the proper connection number of $G$ has the following apparent bounds:
$$1 \le pc(G) \le \min\{\chi'(G), rc(G)\}\le m.$$ Furthermore, $pc(G)=1$ if and only 
if $G = K_n$ and $pc(G)=m$ if and only if $G=K_{1,m}$ is a star of size $m$.

 The paper is organized as follows: In Section 2, we give the basic definitions and 
 some useful lemmas. In Section 3, we study the proper connection number of bridgeless 
 graphs, and present a tight upper bound for the proper connection number of a 
 graph by using this result. In Section 4, we give some sufficient conditions for 
 graphs to have $2$-proper connection number two. In Section 5, we investigate the 
 proper connection number of dense graphs.

\section{Preliminaries}

At the beginning of this section, we list some fundamental results on proper-path coloring.

\begin{lemma}\label{lem2.1}\cite{Andrews} If $G$ is a nontrivial connected graph and $H$ 
is a connected spanning subgraph of $G$, then $pc(G)\le pc(H)$. In particular, 
$pc(G)\le pc(T)$ for every spanning tree $T$ of $G$.
\end{lemma}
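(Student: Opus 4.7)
The plan is to prove this by a direct coloring-extension argument: starting from a proper-path coloring of the spanning subgraph $H$, I would show that it extends to a proper-path coloring of $G$ without increasing the palette.

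First, I would fix an optimal proper-path coloring $c_H$ of $H$ using exactly $pc(H)$ colors, say from the palette $\{1,2,\dots,pc(H)\}$. Then I would define an edge-coloring $c_G$ of $G$ by setting $c_G(e) = c_H(e)$ for every $e \in E(H)$, and assigning to each edge $e \in E(G)\setminus E(H)$ an arbitrary color from the same palette (for concreteness, color $1$). By construction $c_G$ uses at most $pc(H)$ colors.

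Next I would verify that $c_G$ is a proper-path coloring of $G$. Let $u,v$ be any two distinct vertices of $G$. Since $V(H)=V(G)$ and $H$ is connected, and since $c_H$ is a proper-path coloring of $H$, there exists a proper $u$--$v$ path $P$ in $H$. All edges of $P$ belong to $E(H)$, so their colors under $c_G$ agree with those under $c_H$; hence $P$ is still a proper $u$--$v$ path in the edge-colored graph $(G,c_G)$. This shows $pc(G)\le pc(H)$. The "in particular" clause is then immediate because any spanning tree $T$ of $G$ is a connected spanning subgraph of $G$.

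There is no real obstacle here; the key observation is simply that deleting edges of $G$ (or equivalently, ignoring the colors on edges outside $H$) cannot destroy a proper path that already lies inside $H$. The argument is monotonic in this sense, and the same template would work for any path property closed under edge-addition of new colors to the host graph.
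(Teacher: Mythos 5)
Your argument is correct and is the standard proof of this monotonicity lemma: extend an optimal proper-path coloring of $H$ to $G$ arbitrarily within the same palette, and observe that every proper path in $H$ survives as a proper path in $G$ since $H$ is spanning. The paper itself states this lemma only by citation to Andrews et al.\ without reproducing a proof, so there is nothing to contrast with; your write-up is exactly the argument one would expect, and it is complete.
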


\begin{lemma}\label{lem2.2}\cite{Andrews}
If $T$ is a nontrivial tree, then $pc(T)=\chi'(T)=\Delta(T)$.
\end{lemma}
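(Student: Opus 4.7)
The plan is to sandwich $pc(T)$ between $\chi'(T)$ from above and below, then invoke the classical identity $\chi'(T) = \Delta(T)$.

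First I would establish the upper bound $pc(T) \le \chi'(T)$. This is immediate: any proper edge-coloring of $T$ makes every path in $T$ a proper path (adjacent edges on the path are adjacent in $T$ and hence receive distinct colors), so such a coloring is in particular a proper-path coloring. Alternatively, one can cite the general observation $pc(G)\le \chi'(G)$ already recorded in the introduction.

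Next I would prove the matching lower bound $pc(T) \ge \chi'(T)$, and this is where the tree structure does the work. The key point is that in a tree there is a \emph{unique} path between any two vertices. Take any two adjacent edges $e_1=uv$ and $e_2=vw$ of $T$. The unique $u\text{--}w$ path in $T$ is $u,v,w$, using precisely $e_1$ and $e_2$. In any proper-path coloring this path must be proper, forcing $c(e_1)\ne c(e_2)$. Since this holds for every pair of adjacent edges, any proper-path coloring of $T$ is automatically a proper edge-coloring, giving $pc(T) \ge \chi'(T)$.

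Finally I would appeal to the well-known fact that $\chi'(T) = \Delta(T)$ for any tree. Since a tree is bipartite, this is K\"onig's edge-coloring theorem; one can also give a direct inductive argument by rooting $T$ and coloring edges level by level, reusing the parent-edge color appropriately, which never requires more than $\Delta(T)$ colors. Combining the three ingredients yields $pc(T) = \chi'(T) = \Delta(T)$.

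There is essentially no obstacle here; the only subtlety is noticing that uniqueness of paths in a tree upgrades the weak condition ``every pair is joined by some proper path'' to the strong condition ``adjacent edges have distinct colors,'' which is what forces $pc$ to collapse onto $\chi'$.
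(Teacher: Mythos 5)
Your proof is correct. The paper itself gives no proof of this lemma --- it is quoted from Andrews et al.\ \cite{Andrews} --- but your argument is the standard one and matches what the cited source does: the uniqueness of paths in a tree forces any proper-path coloring to be a proper edge-coloring (so $pc(T)\ge\chi'(T)$), the reverse inequality $pc(T)\le\chi'(T)$ is the general observation already recorded in the introduction, and $\chi'(T)=\Delta(T)$ follows from K\"onig's edge-coloring theorem since trees are bipartite. No gaps.
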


Given a colored path $P =v_1v_2\ldots v_{s-1}v_s$ between any two vertices 
$v_1$ and $v_s$, we denote by $start(P)$ the color of the first edge in the path, 
i.e., $c(v_1v_2)$, and by $end(P)$ the last color, i.e., $c(v_{s-1}v_s)$. 
If $P$ is just the edge $v_1v_s$, then $start(P)=end(P)=c(v_1v_s)$.

\begin{defi}
Let $c$ be an edge-coloring of $G$ that makes $G$ proper connected. We say that 
$G$ has the strong property under $c$ if for any pair of vertices $u, v\in V(G)$, 
there exist two proper paths $P_1$, $P_2$ between them (not necessarily disjoint) 
such that $start(P_1)\neq start(P_2)$ and $end(P_1)\neq end(P_2)$.
\end{defi}

In \cite{Borozan}, the authors studied the proper-connection numbers in $2$-connected 
graphs. Also, they presented a result which improves the upper bound $\Delta(G)+1$ of 
$pc(G)$ to the best possible whenever the graph $G$ is bipartite and $2$-connected.

\begin{lemma}\cite{Borozan}\label{lem2.3}
Let $G$ be a graph. If $G$ is bipartite and $2$-connected, then $pc(G)=2$ and there 
exists a $2$-edge-coloring $c$ of $G$ such that $G$ has the strong property under $c$.
\end{lemma}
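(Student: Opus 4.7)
The plan is to prove the lemma by induction on an open ear decomposition of $G$. Since $G$ is 2-connected, I can write $G = C_0 \cup P_1 \cup \cdots \cup P_k$, where $C_0$ is a cycle and each $P_i$ is an ear attached to $C_0 \cup P_1 \cup \cdots \cup P_{i-1}$. Bipartiteness forces $C_0$ to have even length, so for the base case I would 2-edge-color $C_0$ alternately with colors $1$ and $2$. For any two vertices on $C_0$, the two arcs between them are both proper paths, and a direct parity check shows they have different start colors and different end colors, so $C_0$ has the strong property.

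For the inductive step I assume the partial graph $G' = C_0 \cup P_1 \cup \cdots \cup P_{i-1}$ is 2-edge-colored with the strong property, and I add the ear $P = x w_1 w_2 \cdots w_{\ell-1} y$. I would color the edges of $P$ alternately with the two colors. Pairs lying entirely in $V(G')$ inherit their strong-property proper paths, so only pairs involving at least one internal vertex $w_i$ need verification. For a new pair $(w_i, a)$, I would build two candidate proper paths by going along $P$ to $x$ and then through $G'$ to $a$, or along $P$ to $y$ and then through $G'$ to $a$; at each junction the strong property of $G'$ supplies a proper continuation whose start color is compatible with the color of the ear segment. The different-start condition at $w_i$ is automatic from the alternating coloring of $P$.

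The crucial structural observation driving the argument is that in a bipartite graph with a 2-edge-coloring, every proper path has strictly alternating colors, so its end color is determined by its start color and the parity of its length; since path-length parity in a bipartite graph is determined by the bipartition, the two strong-property $G'$-paths between any pair have a predictable $(\mathrm{start},\mathrm{end})$ pattern: same-color pairs $(1,1),(2,2)$ if the endpoints lie in different parts, and opposite-color pairs $(1,2),(2,1)$ if they lie in the same part. This rigidity ensures that the color compatibility at the junctions $x$ and $y$ forces the end colors at $a$ to differ, giving the strong property for $(w_i,a)$.

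The main obstacle, and the most delicate case, is the pair $(w_i, z)$ with $z \in \{x,y\}$, where one of the two required proper paths is forced to go directly along the ear; here the parity constraints linking $\ell$ with the $(\mathrm{start},\mathrm{end})$ pattern of the $G'$-path between $x$ and $y$ are exactly what rescues the argument. A similar parity bookkeeping handles pairs $(w_i,w_j)$ of two internal ear vertices, using the detour through $x$ and $y$ via $G'$. Once these cases are verified, the induction closes and the resulting 2-edge-coloring exhibits the strong property; in particular $pc(G) \leq 2$, and combined with the trivial lower bound $pc(G) \geq 2$ (since $G$ is not complete, as it is bipartite with at least one cycle), we get $pc(G)=2$.
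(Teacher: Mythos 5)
This lemma is quoted from \cite{Borozan} and the paper gives no proof of its own; the paper only remarks that the authors of \cite{Borozan} proved it via (closed) ear-decompositions of $2$-connected graphs, which is exactly the route you take. Your induction is sound: the key observation that a proper path under a $2$-coloring must strictly alternate, so that in a bipartite graph its end color is forced by its start color and the bipartition, correctly reduces the strong property to exhibiting two proper paths with distinct start colors, and your junction/parity analysis at $x$ and $y$ (in particular for the pairs $(w_i,w_j)$ of internal ear vertices, where the $x$--$y$ path in $G'$ must satisfy constraints at both ends) closes the argument.
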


\begin{lemma}\cite{Borozan}\label{lem2.4}
Let $G$ be a graph. If $G$ is $2$-connected, then $pc(G)\leq 3$ and there exists a 
$3$-edge-coloring $c$ of $G$ such that $G$ has the strong property under $c$.
\end{lemma}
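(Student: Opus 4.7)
The plan is to prove the statement by induction on an ear decomposition of $G$. Every $2$-connected graph admits a decomposition $G = H_0 \cup H_1 \cup \cdots \cup H_k$ where $H_0$ is a cycle and, for each $i \ge 1$, $H_i = H_{i-1} \cup P_i$ for an ear $P_i$ (a path whose two endpoints lie in $V(H_{i-1})$ and whose internal vertices do not). Each $H_i$ is $2$-connected, so induction preserves the inductive setting.

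For the base case $G = H_0 = C_n$, I would split on parity. If $n$ is even, Lemma \ref{lem2.3} already gives a $2$-edge-coloring of $H_0$ with the strong property, which we promote to a $3$-coloring trivially. If $n$ is odd, I would color the cycle edges in cyclic order by $1,2,1,2,\dots,1,2,3$; a direct check shows that for any two vertices the two arcs joining them use three colors among the four endpoints in such a way that the starts differ and the ends differ.

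For the inductive step, assume $H_{i-1}$ admits a $3$-edge-coloring $c$ with the strong property and let $P_i = u x_1 x_2 \cdots x_t v$ with $t \ge 0$. If $t = 0$, simply color the new edge $uv$ with any color in $\{1,2,3\}$; the strong property of $H_{i-1}$ transfers verbatim since every pair of old vertices still has its two old proper paths. If $t \ge 1$, I would extend $c$ to $P_i$ by a proper alternating coloring using two or three colors; the only freedom that truly matters is the pair $(c(ux_1), c(vx_t))$. For any new vertex $x_j$ and any $y \in V(H_{i-1})$, I would build the two required proper paths as the two concatenations $x_j \to u \to y$ and $x_j \to v \to y$, where the $u\to y$ and $v\to y$ legs come from the strong property in $H_{i-1}$. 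Since each pair $(u,y)$ and $(v,y)$ admits two proper paths with distinct starts and ends, I can always select one whose starting color differs from $c(ux_1)$ (respectively $c(vx_t)$), keeping each concatenation proper; the starts of the two concatenated paths at $x_j$ are automatically distinct because $P_i$ is properly colored. Pairs of new vertices on the same ear are handled by the analogous pair of arcs along $P_i$ through $u$ and through $v$.

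The main obstacle will be verifying that we can simultaneously arrange \emph{different end colors at $y$} for the two concatenated paths, because the ends only come from the $H_{i-1}$ portion and the choices are already constrained by the start-compatibility condition above. I would address this by a short palette argument: the two old $u \to y$ paths contribute two distinct end colors in $\{1,2,3\}$, and the two old $v \to y$ paths contribute another two distinct end colors; with only three colors available, these two $2$-element subsets either coincide, overlap, or differ, and in each case one can pick one path from each pair so that the chosen ends differ while also meeting the start-compatibility constraints, possibly after first choosing $(c(ux_1), c(vx_t))$ to dodge the tightest pattern. The remaining subcases are ears of length $1$, $2$, or $3$, where the ear itself offers little palette flexibility; these I would handle by an explicit case analysis based on the two edge colors available at $u$ and at $v$ in the two strong paths witnessing the hypothesis in $H_{i-1}$, showing that three colors always suffice to choose $c(ux_1)$ and $c(vx_t)$ properly and to keep the extended coloring strong.
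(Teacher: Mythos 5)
This lemma is not proved in the paper at all: it is imported verbatim from \cite{Borozan}, and the authors even remark at the start of Section~3 that the proof there proceeds by (closed) ear decomposition. So your overall strategy --- induction along an ear decomposition, with the strong property as the inductive invariant --- is exactly the strategy of the cited source, and your base case (the alternating $1,2,\dots,1,2,3$ colouring of an odd cycle, where both arcs are proper and leave/enter each vertex on differently coloured edges) is fine.

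However, the inductive step has a genuine gap, and it is precisely the step you flag as ``the main obstacle.'' The colours $c(ux_1)$ and $c(vx_t)$ are chosen \emph{once} for the ear, but the end-colour condition at $y$ must hold for \emph{every} old vertex $y$ simultaneously. For a fixed $y$, if $c(ux_1)$ happens to equal one of the two starts of the $u$--$y$ witnesses and $c(vx_t)$ equals one of the two starts of the $v$--$y$ witnesses, then the usable $u$--$y$ path and the usable $v$--$y$ path are both forced, and their ends may coincide; each $y$ therefore vetoes up to two of the (at most nine, fewer for short ears) admissible pairs $\bigl(c(ux_1),c(vx_t)\bigr)$, and nothing in your argument prevents the vetoes over all $y$ from covering every admissible pair. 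The inductive hypothesis hands you only two witness paths per pair, so you cannot simply ``pick another one.'' A second instance of the same problem occurs for two internal ear vertices: the path routed through $H_{i-1}$ needs a single proper $u$--$v$ path avoiding a prescribed start at $u$ \emph{and} a prescribed end at $v$, which the strong property does not supply (one witness may fail the start condition and the other the end condition). Closing these holes requires either a stronger inductive invariant or a much more careful, case-heavy colouring of the ear in terms of its length and the witnesses at $u$ and $v$ --- which is exactly why the proof in \cite{Borozan} is long. As written, your ``palette argument'' does not establish that a globally good choice of $\bigl(c(ux_1),c(vx_t)\bigr)$ exists, so the proof is incomplete.
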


\begin{lemma}\cite{HLW}\label{lem2.5}
Let $H=G\cup\{v_1\}\cup\{v_2\}$. If there is a proper-path $k$-coloring $c$ of $G$ such 
that $G$ has the strong property under $c$. Then $pc(H)\leq k$ as long as $v_1,v_2$ are 
not isolated vertices of $H$.
\end{lemma}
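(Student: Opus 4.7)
I plan to extend the given $k$-coloring $c$ of $G$ to a proper-path $k$-coloring of $H$; since the strong property cannot be satisfied with $k=1$ (it would require two proper paths with distinct start colors), I may assume $k\geq 2$. The only edges of $H$ lacking colors are those incident to $v_1$ or $v_2$: the edges from $v_i$ to $V(G)$ for $i=1,2$ and, if present, the edge $v_1v_2$ itself. My strategy is to assign one color $\alpha$ to every edge from $v_1$ to $V(G)$, one color $\beta$ to every edge from $v_2$ to $V(G)$, and to choose $\alpha,\beta\in\{1,\dots,k\}$ (together with a color for $v_1v_2$ when needed) depending on the case, so that proper connectivity of $H$ can be verified pair by pair via the strong property of $c$.

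For a pair $v_i,w$ with $w\in V(G)$, I pick any neighbor $u_0$ of $v_i$ in $G$: the edge $v_iu_0$ carries a single color, and the strong property of $c$ furnishes a proper $u_0$-$w$ path in $G$ whose first color differs from it, yielding a proper $v_i$-$w$ path in $H$. If $v_i$ has no neighbor in $G$, then $v_1v_2\in E(H)$ and I route instead through $v_{3-i}$ and one of its neighbors in $G$, which only places a mild constraint on the color of $v_1v_2$. The pair $v_1,v_2$ is handled directly by the edge when $v_1v_2\in E(H)$, and by the two-edge path $v_1\,x\,v_2$ when $v_1,v_2$ share a common neighbor $x\in V(G)$; in either of these subcases it suffices to take $\alpha=1,\beta=2$.

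The crux is the remaining subcase: $v_1v_2\notin E(H)$ and $N_G(v_1)\cap N_G(v_2)=\emptyset$. Fix $u\in N_G(v_1)$ and $u'\in N_G(v_2)$, necessarily with $u\ne u'$. By the strong property applied to $(u,u')$, there exist two proper $u$-$u'$ paths $P_1,P_2$ in $G$ with color pairs $(s_1,e_1),(s_2,e_2)$ satisfying $s_1\ne s_2$ and $e_1\ne e_2$. I want to pick $(\alpha,\beta)$ so that the concatenated path $v_1\,u\,P_i\,u'\,v_2$ is proper for some $i$, i.e., $s_i\ne\alpha$ and $e_i\ne\beta$. A short case analysis shows that the only $(\alpha,\beta)$ for which both $P_1$ and $P_2$ fail are $(s_1,e_2)$ and $(s_2,e_1)$, which are two distinct pairs since $s_1\ne s_2$. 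Because $k\geq 2$ yields at least $k^2\geq 4$ candidate pairs, a good $(\alpha,\beta)$ always exists, and the coloring is then defined accordingly.

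The main obstacle is precisely this final subcase: the strong property supplies only two proper $u$-$u'$ paths in $G$, and does not directly guarantee a single proper path whose start avoids one prescribed color and whose end avoids another. The bad-pair count above is the key observation that removes this obstacle; once $(\alpha,\beta)$ is chosen, the checks for the remaining $v_i$-$w$ pairs reduce to routine applications of the strong property.
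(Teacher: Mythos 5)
This lemma is imported from \cite{HLW} and the present paper gives no proof of it, so there is nothing in the text to compare against; judged on its own, your argument is correct and is the natural one (color all $v_1$--$V(G)$ edges with a single color $\alpha$, all $v_2$--$V(G)$ edges with a single color $\beta$, and use the strong property to route each pair). The crux you identify is real, and your bad-pair count settles it: both $P_1$ and $P_2$ fail exactly when ($\alpha=s_1$ or $\beta=e_1$) and ($\alpha=s_2$ or $\beta=e_2$), and since $s_1\neq s_2$ kills the conjunct $\alpha=s_1=s_2$ and $e_1\neq e_2$ kills $\beta=e_1=e_2$, the bad set is precisely $\{(s_1,e_2),(s_2,e_1)\}$, of size $2<k^2$. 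The choice of $(\alpha,\beta)$ made for the one fixed pair $(u,u')$ does not disturb the $v_i$--$w$ checks, because for any $\alpha$ the strong property at $(u_0,w)$ always supplies a start color different from $\alpha$. Two small caveats worth stating explicitly: your inference $k\ge 2$ uses that $G$ has at least one pair of distinct vertices (for $G$ trivial the strong property is vacuous and the conclusion can fail, e.g.\ $H=P_3$), and the case where neither $v_1$ nor $v_2$ has a neighbor in $V(G)$ must be excluded since then $H$ is disconnected; both are implicit in how the lemma is applied in this paper ($G$ nontrivial, $H$ connected), but a written-up version should say so.
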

As a result of Lemma \ref{lem2.5}, we obtain the following corollary.
\begin{coro}\label{cor2.6}
Let $H$ be the graph that is obtained by identifying $u_i$ of $G$ to $v_i$ of a path $P^i$ 
for $i=1,2$, where $d_{P^i}(v_i)=1$. If there is a proper-path $k$-coloring $c$ of $G$ such 
that $G$ has the strong property under $c$, then $pc(H)\leq k$.
\end{coro}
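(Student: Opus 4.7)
The plan is to keep the given strong-property $k$-coloring $c$ of $G$ unchanged on $E(G)$ and to extend it to $H$ by coloring each appended path using only two colors from the existing palette in an alternating pattern. Writing $P^i = v_i w_1^{(i)} w_2^{(i)}\cdots w_{s_i}^{(i)}$, I assign $c(v_i w_1^{(i)}) = \alpha_i$ and alternate along the rest of $P^i$ between $\alpha_i$ and some $\alpha_i' \in \{1,\dots,k\}\setminus\{\alpha_i\}$. This automatically makes every sub-path of $P^i$ a proper path, so the whole analysis reduces to choosing $\alpha_1,\alpha_2$ cleverly and then verifying proper-connectivity between vertices lying in different pieces of $H$.

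For a pair of vertices both in $V(G)$ the original coloring $c$ suffices, and for a pair inside a common $P^i$ the alternation handles it. For a pair $(a,b)$ with $a \in V(P^i)\setminus\{v_i\}$ and $b \in V(G)$, I concatenate the $P^i$-segment from $a$ to $v_i$ with a proper $v_i$-$b$ path in $G$ whose first color is different from $\alpha_i$; the strong property of $c$ supplies two proper $v_i$-$b$ paths with distinct starting colors, so at least one is compatible. These cases use no constraint on $\alpha_i$ beyond its existence.

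The hard part will be the cross case: $a \in V(P^1)\setminus\{v_1\}$ and $b \in V(P^2)\setminus\{v_2\}$. The candidate path concatenates three pieces and forces a proper $v_1$-$v_2$ path $Q$ in $G$ satisfying \emph{both} $\mathrm{start}(Q) \ne \alpha_1$ and $\mathrm{end}(Q) \ne \alpha_2$ simultaneously. By the strong property of $c$ there exist two proper $v_1$-$v_2$ paths $Q_1,Q_2$ with $\mathrm{start}(Q_1)=A \ne B=\mathrm{start}(Q_2)$ and $\mathrm{end}(Q_1)=C \ne D=\mathrm{end}(Q_2)$. A short case check shows $Q_1$ works unless $\alpha_1=A$ or $\alpha_2=C$, and $Q_2$ works unless $\alpha_1=B$ or $\alpha_2=D$, so the only obstructing choices are $(\alpha_1,\alpha_2) \in \{(A,D),(B,C)\}$. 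Since $k \ge 2$, these two forbidden pairs can always be dodged by an appropriate choice of $(\alpha_1,\alpha_2)$. The degenerate configuration $u_1 = u_2$ (so $v_1 = v_2$ in $G$ and no bridging path in $G$ is required) reduces to the trivial constraint $\alpha_1 \ne \alpha_2$; the subcase where some $P^i$ is a single vertex ($s_i=0$) carries no constraint at all and is handled vacuously.

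Assembling these pieces yields a proper-path $k$-coloring of $H$, hence $pc(H) \le k$. The main conceptual point is that the strong property's two degrees of freedom — two available starting colors and two available ending colors for $v_1$-$v_2$ paths in $G$ — are exactly matched to the two independent constraints forced by the two alternately colored pendant paths, and this is essentially the same mechanism underlying Lemma~\ref{lem2.5} that gets iterated along the lengths of $P^1$ and $P^2$.
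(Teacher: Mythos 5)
Your proof is correct. The paper states this corollary without proof, as an immediate consequence of Lemma~\ref{lem2.5}, and your argument --- alternating two colors along each pendant path so that only the color of the edge at $v_i$ constrains anything, handling the mixed case for free via the strong property, and choosing $(\alpha_1,\alpha_2)$ to dodge the two forbidden pairs $(A,D)$ and $(B,C)$ in the cross case --- is exactly the natural way to fill in that derivation, using the same mechanism as Lemma~\ref{lem2.5}.
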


\begin{lemma}\label{lem4.2}
Let $C_n=v_1v_2\ldots v_nv_1$ be an $n$-vertex cycle and $G=C_n+v_{n-1}v_1$. One has that $pc_2(G)=2$.
\end{lemma}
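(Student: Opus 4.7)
The graph $G$ is $2$-connected (it contains the cycle $C_n$), so $pc_2(G)$ is well-defined. The lower bound $pc_2(G)\ge 2$ is immediate: under a $1$-coloring only single-edge paths are proper, so two internally vertex-disjoint proper paths between a pair of non-adjacent vertices would require at least one path of length $\ge 2$, which is impossible. The real task is to exhibit a $2$-edge-coloring under which every pair of vertices admits two internally vertex-disjoint proper paths.

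I would view $G$ as a theta-type graph in which $v_1$ and $v_{n-1}$ are joined by three internally disjoint paths: the chord (length $1$), the detour $v_1v_nv_{n-1}$ (length $2$), and the long path $P=v_1v_2\cdots v_{n-1}$ (length $n-2$). My proposed coloring is: set $c(v_iv_{i+1})=1$ for odd $i$ and $c(v_iv_{i+1})=2$ for even $i$ along $P$; color the chord $v_{n-1}v_1$ with $2$; color $v_nv_1$ with $2$; color $v_{n-1}v_n$ with $1$ when $n$ is even and with $2$ when $n$ is odd.

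I would then verify $2$-proper-connectivity in three regimes. For a pair $v_i,v_j$ with $1\le i<j\le n-1$, the first proper path is the ``right'' subpath $v_iv_{i+1}\cdots v_j$ of $P$, and the second is a ``left'' path running from $v_i$ back to $v_1$ along $P$, crossing to $v_{n-1}$ via either the chord or the detour, and then returning along $P$ to $v_j$; a junction-color check shows that the chord-crossing works when $n$ is odd and the detour-crossing works when $n$ is even. For a pair $(v_n,v_j)$ with $2\le j\le n-2$, the only two internally disjoint candidates are $v_nv_1v_2\cdots v_j$ and $v_nv_{n-1}v_{n-2}\cdots v_j$, and both are proper under the coloring. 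For $(v_n,v_1)$ and $(v_n,v_{n-1})$ one pairs the direct edge with the long route through $P$; for $(v_1,v_{n-1})$ the chord and $P$ are both proper and internally disjoint.

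The main obstacle is the parity bookkeeping. When $n$ is odd, the cycle $v_1v_2\cdots v_{n-1}v_1$ (through the chord) has even length and is properly $2$-edge-colored by the recipe above, so the chord crossing succeeds everywhere; however, the detour becomes monochromatic and special care is needed for pairs involving $v_n$. When $n$ is even, this same cycle has odd length and the chord crossing fails at $v_{n-1}$, which forces the ``left'' route to go through $v_n$; this is precisely why the detour has been made proper in that case. Once the two regimes are separated, each of the verifications reduces to comparing a handful of consecutive edge colors.
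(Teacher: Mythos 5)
Your proposal is correct and follows essentially the same route as the paper: for odd $n$ your coloring coincides with the paper's (properly $2$-coloring the even cycle $v_1v_2\cdots v_{n-1}v_1$ through the chord and giving the two edges at $v_n$ color $2$), and the verification splits into the same cases, while for even $n$ the paper simply invokes $pc_2(C_n)=2$ where you spell out the alternating coloring explicitly. No gaps.
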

\begin{proof}
If $n$ is an even integer, it is obvious that $pc_2(G)\leq pc_2(C_n)=2$. So we only need to consider the 
case that $n$ is odd. Let $C'=v_1v_2v_3\ldots v_{n-1}v_1$. Then we have that $C'$ is an even cycle. We 
color the edges $v_{2i-1}v_{2i}$ by color 1 for $i=1, 2, \ldots, \frac{n-1}{2}$ and color the other edges 
by color 2. Now we show that for any $v_i, v_j$, there are two disjoint proper paths connecting them. 
If $i, j\neq n$, we can see that $P_1=v_iv_{i+1}\ldots v_j$ and $P_2=v_iv_{i-1}\ldots v_{1}v_{n-1}\ldots v_j$ 
are two disjoint proper paths connecting $v_i$ and $v_j$. If $i=n$, we also have that $Q_1=v_nv_1v_2\ldots v_j$ 
and $Q_2=v_nv_{n-1}v_{n-2}\ldots v_j$ are two disjoint proper paths connecting $v_n$ and $v_j$. The proof is thus 
complete.
\end{proof}

\section{Upper bounds of proper connection number}

In \cite{Borozan}, the authors studied the proper connection number for $2$-(edge)-connected graphs by 
(closed) ear-decomposition. Here, we reproof the result for $2$-edge-connected graphs by using another method.

\begin{theorem}\label{th2.1}
If $G$ is a connected bridgeless graph with $n$ vertices, then $pc(G)\le 3$. Furthermore, there exists a 
$3$-edge-coloring $c$ of $G$ such that $G$ has the strong property under $c$.
\end{theorem}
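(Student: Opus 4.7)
The plan is to induct on the number of blocks in the block-cut tree of $G$. Since $G$ is bridgeless and connected, no block of $G$ can consist of a single edge, so every block containing an edge is $2$-vertex-connected; by Lemma~\ref{lem2.4} each such block admits a $3$-edge-coloring with the strong property. In the base case where $G$ itself is a single block, $G$ is $2$-connected and Lemma~\ref{lem2.4} finishes the proof directly.

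For the inductive step, suppose $G$ has at least two blocks, and select a leaf block $B$ of the block-cut tree, meeting the rest of $G$ at the unique cut vertex $v$. Set $G' = G - (V(B) \setminus \{v\})$. Then $G'$ is connected and still bridgeless: if some $e \in E(G')$ were a bridge of $G'$, any $G$-path joining the two sides of $e$ that dips into $V(B) \setminus \{v\}$ would have to enter and leave $B$ at $v$, so it could be shortened to a $G'$-path avoiding $e$, contradicting the bridgelessness of $G$. Since $G'$ has strictly fewer blocks, the inductive hypothesis provides a $3$-edge-coloring $c'$ of $G'$ with the strong property, and Lemma~\ref{lem2.4} provides a $3$-edge-coloring $c_B$ of $B$ with the strong property. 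Using a common palette of three colors, define $c$ on $G$ by setting $c|_{E(G')}=c'$ and $c|_{E(B)}=c_B$.

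The key step is to verify that $c$ gives $G$ the strong property. Pairs of vertices that both lie in $V(G')$ or both lie in $V(B)$ inherit two witnessing paths from $c'$ or $c_B$. The only real work is for a pair $u\in V(G')\setminus\{v\}$, $w\in V(B)\setminus\{v\}$: by the strong property, choose proper $u$-$v$ paths $P_1,P_2$ in $G'$ with distinct starts and distinct ends, and similarly proper $v$-$w$ paths $Q_1,Q_2$ in $B$. We must produce concatenations $P_{i_k}Q_{j_k}$ for $k=1,2$ such that $\{i_1,i_2\}=\{j_1,j_2\}=\{1,2\}$ and each concatenation is proper, i.e., $end(P_{i_k})\neq start(Q_{j_k})$. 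Because $end(P_1)\neq end(P_2)$ and $start(Q_1)\neq start(Q_2)$, each $P_i$ is compatible with at least one $Q_j$, and the $2\times 2$ bipartite compatibility graph always contains a perfect matching; a short case check according to whether there are zero, one, or two incompatible pairs confirms this, and the two resulting concatenations then have distinct starts (from $P_{i_1},P_{i_2}$) and distinct ends (from $Q_{j_1},Q_{j_2}$), establishing the strong property for $(u,w)$.

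The main obstacle is precisely this local check at the cut vertex: one has to exploit both halves of the strong property---differing starts on the $G'$-side and differing ends on the $B$-side---to guarantee two compatible concatenations that themselves witness the strong property, rather than just one proper path. Once this gluing argument is in place, the induction on the number of blocks completes the proof without any appeal to ear decompositions.
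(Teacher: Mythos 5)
Your proposal is correct and follows essentially the same route as the paper: induction on the number of blocks, peeling off an end-block (which is $2$-connected since $G$ is bridgeless, so Lemma~\ref{lem2.4} applies), and gluing the two colorings at the cut vertex by matching the strong-property paths so that the concatenations are proper and themselves witness the strong property. Your explicit $2\times 2$ compatibility-matching check is just a more detailed version of the paper's observation that one of the two pairings $P_1\cup Q_1, P_2\cup Q_2$ or $P_1\cup Q_2, P_2\cup Q_1$ must work.
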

\begin{proof}
We prove the result for connected bridgeless graphs by induction on the number of blocks in $G$. First, 
the result clearly holds when $G$ is 2-connected by Lemma \ref{lem2.4}. Suppose that $G$ has at least 
two blocks. Let $X$ be the set of vertices of an end-block of $G$, that is, $X$ contains only one cut 
vertex, say $x$. From Lemma \ref{lem2.4}, we know that $G[X]$ has a $3$-edge-coloring $c_1$ such that 
$G[X]$ has the strong property under $c_1$. Consider the subgraph $H$ of $G$ induced by $(V(G)\setminus X)\cup\{x\}$. 
$H$ is a connected bridgeless graph with the number of blocks one less than $G$. By the induction hypothesis, 
we have that $pc(H)\le 3$ and $H$ has a $3$-edge-coloring $c_2$ such that $H$ has the strong property 
under $c_2$. Let $c$ be the edge-coloring of $G$ such that $c(e)=c_1(e)$ for any $e\in E(G[X])$ and $c(e)=c_2(e)$ 
otherwise. We now show that $G$ has the strong property under the coloring $c$. It suffices to consider the 
pairs $u, v$ such that $u\in X\setminus\{x\}$ and $v\in V(G)\setminus X$. Let $P_1$, $P_2$ be two proper 
paths in $G[X]$ between $u$ and $x$ such that $start(P_1)\neq start(P_2)$ and $end(P_1)\neq end(P_2)$, and let 
$Q_1, Q_2$ be the two proper paths in $H$ between $v$ and $x$ such that $start(Q_1)\neq start(Q_2)$ and 
$end(Q_1)\neq end(Q_2)$. It is obvious that either $P=P_1\cup Q_1, Q=P_2\cup Q_2$ or $P=P_1\cup Q_2, Q=P_2\cup Q_1$ 
are two proper paths between $u$ and $v$ with the property that $start(P)\neq start(Q)$ and $end(P)\neq end(Q)$. 
This completes the proof.
\end{proof}
With a similar analysis and by Lemma \ref{lem2.3}, we have the following theorem.
\begin{theorem}
If $G$ is a bipartite connected bridgeless graph with $n$ vertices, then $pc(G)\le 2$. 
Furthermore, there exists a $2$-edge-coloring $c$ of $G$ such that $G$ has the strong property under $c$.
\end{theorem}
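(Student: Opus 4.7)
The plan is to mimic the proof of Theorem \ref{th2.1}, replacing every appeal to Lemma \ref{lem2.4} with an appeal to Lemma \ref{lem2.3}, and to check that bipartiteness is inherited by the relevant subgraphs so that Lemma \ref{lem2.3} actually applies. I would proceed by induction on the number of blocks of $G$.

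For the base case, suppose $G$ has exactly one block, i.e., $G$ is 2-connected (bridgelessness rules out the only other single-block possibility, a $K_2$). Since $G$ is also bipartite, Lemma \ref{lem2.3} yields directly a $2$-edge-coloring $c$ under which $G$ is proper connected and has the strong property, so we are done.

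For the inductive step, assume $G$ has at least two blocks and let $X$ be the vertex set of an end-block of $G$, with unique cut vertex $x\in X$. Because $G$ is bridgeless, this end-block contains at least one cycle, hence $G[X]$ is $2$-connected; being a subgraph of the bipartite graph $G$, it is also bipartite. Lemma \ref{lem2.3} then gives a $2$-edge-coloring $c_1$ of $G[X]$ with the strong property. Let $H$ be the subgraph of $G$ induced by $(V(G)\setminus X)\cup\{x\}$. Then $H$ is connected, bipartite, bridgeless (any bridge of $H$ would be a bridge of $G$), and its number of blocks is one less than that of $G$. The induction hypothesis supplies a $2$-edge-coloring $c_2$ of $H$ with the strong property. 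Combine them into a single edge-coloring $c$ of $G$ by setting $c(e)=c_1(e)$ for $e\in E(G[X])$ and $c(e)=c_2(e)$ otherwise; note the two pieces share no edges, only the vertex $x$, so this is well defined.

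It remains to verify the strong property for $c$. For a pair $u,v$ both in $V(G[X])$ or both in $V(H)$, the two required paths already exist within the respective block-colorings. The only nontrivial case is $u\in X\setminus\{x\}$ and $v\in V(G)\setminus X$: take proper $u$--$x$ paths $P_1,P_2$ in $G[X]$ with $start(P_1)\ne start(P_2)$ and $end(P_1)\ne end(P_2)$, and proper $x$--$v$ paths $Q_1,Q_2$ in $H$ with $start(Q_1)\ne start(Q_2)$ and $end(Q_1)\ne end(Q_2)$. Of the two pairings $\{P_1\cup Q_1,P_2\cup Q_2\}$ and $\{P_1\cup Q_2,P_2\cup Q_1\}$, at least one avoids a color clash at $x$ in both concatenated paths, giving two proper $u$--$v$ paths whose first colors differ (inherited from the $P_i$'s) and whose last colors differ (inherited from the $Q_j$'s). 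I do not expect any serious obstacle here: the only thing worth double-checking is that the end-block is genuinely $2$-connected under the bridgeless hypothesis so that Lemma \ref{lem2.3} is applicable, and that the induced subgraph $H$ stays bridgeless, both of which are immediate.
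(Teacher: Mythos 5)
Your proposal is correct and is exactly the argument the paper intends: the paper proves this theorem by remarking that it follows from ``a similar analysis'' to Theorem \ref{th2.1} with Lemma \ref{lem2.3} replacing Lemma \ref{lem2.4}, which is precisely the induction on blocks you carry out (with the added, and welcome, explicit check that bipartiteness and bridgelessness pass to the end-block and to $H$).
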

An Eulerian graph is clearly bridgeless. As a result of Theorem \ref{th2.1}, we have the following corollary.
\begin{coro}
For any Eulerian graph $G$, one has that $pc(G)\leq 3$. Furthermore, if $G$ is Eulerian and bipartite, 
one has that $pc(G)= 2$.
\end{coro}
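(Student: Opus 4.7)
The plan is to reduce both assertions directly to the preceding bridgeless theorems; the only nontrivial piece is verifying that every Eulerian graph is bridgeless, which the statement already takes for granted. I would open with that verification: suppose $e=uv$ is a bridge in an Eulerian graph $G$, and let $A$ be the component of $G-e$ containing $u$. Every vertex of $A$ other than $u$ has the same degree in $A$ as in $G$, hence even, while $d_A(u)=d_G(u)-1$ is odd. The handshake lemma applied to $A$ then forces $\sum_{w\in A}d_A(w)$ to be even, contradicting the presence of exactly one odd-degree vertex. Therefore $G$ is bridgeless.

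Once bridgelessness is in hand, the first assertion $pc(G)\le 3$ is an immediate invocation of Theorem \ref{th2.1}, since $G$ is a connected bridgeless graph. For the bipartite case, I would apply the bipartite analogue (the unnumbered theorem just before this corollary) in exactly the same way to obtain $pc(G)\le 2$.

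The only remaining task is the matching lower bound $pc(G)\ge 2$ in the bipartite case. Recall from the introduction that $pc(G)=1$ holds if and only if $G=K_n$. A bipartite Eulerian graph cannot be complete: the unique connected bipartite complete graph is $K_2$, whose vertices have odd degree $1$ and so is not Eulerian. Hence $G\neq K_n$ and $pc(G)\ge 2$, giving $pc(G)=2$. The main obstacle, if there is one, is this last step, but it is routine; everything else is a quotation of the previous two theorems.
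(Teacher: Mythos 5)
Your proposal is correct and follows exactly the route the paper intends: the paper simply remarks that an Eulerian graph is bridgeless and lets the corollary fall out of Theorem~\ref{th2.1} and its bipartite analogue. You have merely supplied the details the paper omits (the parity argument for bridgelessness and the observation that a nontrivial bipartite Eulerian graph is not complete, which is needed for the lower bound $pc(G)\ge 2$), and both of these are correct.
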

\begin{lemma}\label{lem3.2}
Let $G$ be a graph and $H=G-PV(G)$, where $PV(G)$ is the set of the pendent vertices of $G$. 
Suppose that $pc(H)\le 3$ and there is a proper-path $3$-coloring $c$ of $H$ such that $H$ has the 
strong property under $c$. Then one has that $pc(G)\le \max\{3,|PV(G)|\}$.
\end{lemma}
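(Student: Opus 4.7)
The plan is to extend the given strong-property $3$-coloring $c$ of $H$ to a proper-path coloring $c'$ of $G$ using at most $k := \max\{3,|PV(G)|\}$ colors. Since $G$ differs from $H$ only by the pendent vertices and their incident edges, the only colors to assign are those on the pendent edges $\{vu_v : v \in PV(G)\}$, where $u_v$ is the unique neighbor of $v$ in $H$. I will regard $\{1,2,3\}$ as the ``old'' colors already used on $H$ and $\{4,\ldots,k\}$ as ``new'' colors, preferred when coloring pendent edges.

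The coloring rule I propose: for each $u \in V(H)$ carrying $t_u \geq 1$ pendent neighbors, color the $t_u$ pendent edges at $u$ with pairwise distinct colors, assigning $\min(t_u, k-3)$ distinct new colors from $\{4, \ldots, k\}$ and, only if $t_u > k-3$, the remaining $t_u - (k-3)$ edges at $u$ distinct old colors from $\{1, 2, 3\}$. This uses at most $k$ colors overall and satisfies distinctness at each common endpoint by construction.

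Proper connectivity under $c'$ is then checked in cases. For pairs inside $V(H)$, the original proper paths in $H$ still apply. For a pendent $v$ and $w \in V(H) \setminus \{u_v\}$, the concatenated path $v, u_v, P$ is proper whenever $P$ is a proper $u_v$-$w$ path in $H$ with $start(P) \neq c'(vu_v)$; the strong property supplies two such $P$ with distinct starts, so at least one avoids $c'(vu_v)$. For two pendents sharing a common neighbor, the three-vertex path through that neighbor is proper by distinctness of the two pendent-edge colors. The crux is two pendents $v, v'$ at different neighbors $u_v \neq u_{v'}$: one needs a proper $u_v$-$u_{v'}$ path $P$ in $H$ with $start(P) \neq c'(vu_v)$ and $end(P) \neq c'(v'u_{v'})$, and the strong-property pair $P_1, P_2$ fails to provide such a $P$ only when $(c'(vu_v), c'(v'u_{v'}))$ equals one of the at most two ``bad'' pairs $(start(P_1), end(P_2))$ or $(start(P_2), end(P_1))$, both of which lie in $\{1,2,3\}^2$.

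The key observation is that a bad pair can only arise when both pendent edges receive old colors, i.e., when $t_{u_v}, t_{u_{v'}} \geq k-2$. Combined with $t_{u_v} + t_{u_{v'}} \leq |PV(G)| \leq k$, this forces $2(k-2) \leq k$, i.e., $k \leq 4$. So for $|PV(G)| \geq 5$ the rule already succeeds without refinement, and the main obstacle is the finite casework for $|PV(G)| \leq 4$. For $|PV(G)| = 4$, the tight subcase is two vertices $u, u'$ with $t_u = t_{u'} = 2$ each contributing one old-colored pendent edge, and I would pick those two old colors from $\{1,2,3\}^2$ outside the at most two bad pairs, which is always possible since seven of the nine ordered pairs are valid. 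For $|PV(G)| \leq 3$, all pendent edges must take old colors; I would close with a small counting argument (in the hardest subcase of three pendents at three distinct neighbors, at most $3 \cdot 2 \cdot 3 = 18$ configurations are bad out of $3^3 = 27$, leaving at least nine valid colorings).
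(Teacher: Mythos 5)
Your proposal is correct and follows essentially the same strategy as the paper's proof: keep the strong-property $3$-coloring on $H$, give most pendent edges fresh colors (which make the start/end constraints at that end vacuous), and choose the remaining colors from $\{1,2,3\}$ so as to dodge the at most two bad $(start,end)$ pairs that the strong property leaves between any two attachment vertices. The only difference is bookkeeping --- the paper fixes three distinguished pendent edges and does explicit casework on whether their neighbors coincide, while you distribute new colors per attachment vertex and finish the residual $|PV(G)|\le 4$ cases by counting --- but the underlying argument is the same.
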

\begin{proof}
Assume that $PV(G)=\{v_1,v_2,\ldots,v_k\}$. If $k\leq 2$, we have that $pc(G)\le 3$ by Lemma \ref{lem2.5}. 
So we consider the case that $k\geq 3$. Let $u_i$ be the neighbor of $v_i$ in $G$ for $i=1,2,\ldots, k$, 
and let $\{1,2,3\}$ be the color-set of $c$. We first assign color $j$ to $u_jv_j$ for $j=4,\ldots, k$. 
Then we color the remaining edges $u_1v_1, u_2v_2, u_3v_3$ by colors $1,2,3$.

If $u_1=u_2=u_3$, we assign color $i$ to $u_iv_i$ for $i=1,2,3$. If $u_1=u_2\neq u_3$, let $P$ be a proper 
path of $G$ connecting $u_1$ and $u_3$. Then there are two different colors in $\{1,2,3\}\setminus \{start(P)\}$. 
We assign these two colors to $u_1v_1$ and $u_2v_2$, respectively, and choose a color that is distinct from 
$end(P)$ in \{1,2,3\} for $u_3v_3$. If $u_i\neq u_j$ for $1\leq i\neq j\leq 3$, suppose that $P_{ij}$ is a 
proper path of $G$ between $u_i$ and $u_j$. We choose a color that is distinct from $start(P_{12})$ and 
$start(P_{13})$ in \{1,2,3\} for $u_1v_1$. Similarly, we color $u_2v_2$ by a color in 
$\{1,2,3\}\setminus \{end(P_{12}),start(P_{23})\}$, and color $u_3v_3$ by a color in 
$\{1,2,3\}\setminus\{end(P_{13}), end(P_{23})\}$.

One can see that in all these cases, $v_i$ and $v_j$ are proper connected for $1\le i\neq j\le k$. 
Moreover, as $H$ has the strong property under edge-coloring $c$, it is obvious that $v_i$ and $u$ 
are proper connected for $1\le i\le k$ and $u\in V(H)$. Therefore, we have that $pc(H)\leq k=|PV(G)|$.
Hence, we obtain that $pc(G)\le \max\{3,|PV(G)|\}$.
\end{proof}

\begin{lemma}\label{le3.3}
Let $G$ be a graph with a cut-edge $v_1v_2$, and $G_i$ be the connected graph obtained from $G$ by 
contacting the connected component containing $v_i$ of $G-v_1v_2$ to a vertex $v_i$, where $i=1,2$. 
Then $pc(G)=\max\{pc(G_1),pc(G_2)\}$
\end{lemma}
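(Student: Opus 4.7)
The equality is proved by establishing the two inequalities separately. Let $C_i$ denote the component of $G - v_1v_2$ containing $v_i$. Then $G_i$ is (isomorphic to) the subgraph of $G$ spanned by $E(C_{3-i}) \cup \{v_1v_2\}$, in which the contracted vertex $v_i$ is a pendant whose unique incident edge is the cut-edge.

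For the lower bound $pc(G) \ge \max\{pc(G_1), pc(G_2)\}$, the idea is to restrict any optimal proper-path coloring of $G$ to the subgraph representing $G_i$. Because $v_1v_2$ is a cut-edge, any simple $u$-$v$ path in $G$ with $u,v \in V(G_i)$ must lie entirely in that subgraph: if both endpoints lie in $V(C_{3-i})$, then the path cannot reach $C_i$ and come back; if one endpoint is the contracted vertex $v_i$, then the path is forced to leave $v_i$ via the cut-edge immediately. Hence every proper $u$-$v$ path in $G$ is already a proper path in $G_i$, giving $pc(G_i) \le pc(G)$ for $i=1,2$.

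For the upper bound, let $k = \max\{pc(G_1), pc(G_2)\}$ and fix proper-path $k$-colorings $c_1$ of $G_1$ and $c_2$ of $G_2$. By permuting the colors of $c_2$ I may assume $c_1(v_1v_2) = c_2(v_1v_2)$; then the coloring $c$ of $G$ defined by $c = c_1$ on $E(C_2) \cup \{v_1v_2\}$ and $c = c_2$ on $E(C_1) \cup \{v_1v_2\}$ is well-defined. For two vertices inside a single component $C_i$, the corresponding proper path under $c_{3-i}$ stays inside $C_i$ (since the contracted vertex is pendant in $G_{3-i}$) and remains a proper path in $G$ under $c$. For a mixed pair $u \in V(C_1)$, $v \in V(C_2)$, the plan is to form a proper $u$-$v_2$ path $P_u$ in $G_2$, which must end with the cut-edge, and a proper $v_1$-$v$ path $P_v$ in $G_1$, which must begin with the cut-edge, and then glue them along the shared occurrence of $v_1v_2$ to obtain a $u$-$v$ walk using $v_1v_2$ exactly once.

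The main step to verify is that this glued walk is a proper path in $G$. Simplicity is automatic because $V(C_1) \cap V(C_2) = \emptyset$, so the two pieces share only the cut-edge. For properness, the only potentially delicate transitions are at the endpoints $v_1, v_2$ of the cut-edge; but the edge of $P_v$ incident to $v_2$ other than $v_1v_2$ has color distinct from $c_1(v_1v_2)$ by properness of $P_v$, and symmetrically the edge of $P_u$ incident to $v_1$ other than $v_1v_2$ has color distinct from $c_2(v_1v_2) = c_1(v_1v_2)$ by properness of $P_u$. The degenerate cases $u=v_1$ or $v=v_2$ collapse one of the two pieces to the single edge $v_1v_2$ and need no extra argument. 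This yields $pc(G) \le k$ and completes the proof.
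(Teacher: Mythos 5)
Your proposal is correct and follows essentially the same route as the paper: the lower bound by restricting an optimal coloring of $G$ to the subgraph representing $G_i$ (which the paper simply calls obvious), and the upper bound by aligning the colorings of $G_1$ and $G_2$ on the cut-edge and gluing proper paths through $v_1v_2$, checking properness at the junction exactly as the paper does. No gaps; your treatment of the lower bound and the degenerate cases is in fact slightly more explicit than the paper's.
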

\begin{proof}
First, it is obvious that $pc(G)\ge \max\{pc(G_1),pc(G_2)\}$. Let $pc(G_1)=k_1$ and $pc(G_2)=k_2$. 
Without loss of generality, suppose $k_1\ge k_2$.  Let $c_1$ be a $k_1$-proper coloring of $G_1$ 
and $c_2$ be a $k_2$-proper coloring of $G_2$ such that $c_1(v_1v_2)=c_2(v_1v_2)$ and 
$\{c_2(e):e\in E(G_2)\}\subseteq \{c_1(e):e\in E(G_1)\}$. Let $c$ be the edge-coloring of $G$ such that 
$c(e)=c_1(e)$ for any $e\in E(G_1)$ and $c(e)=c_2(e)$ otherwise. Then $c$ is an edge-coloring of $G$ using $k_1$ 
colors. We will show that $c$ is a proper-path coloring of $G$. For any pair of vertices $u, v\in V(G)$, 
we can easily find a proper path between them if $u,v\in V(G_1)$ or $u,v\in V(G_2)$. Hence we only need 
to consider that $u\in V(G_1)\setminus\{v_1,v_1\}$ and $v\in V(G_2)\setminus\{v_1,v_2\}$. Since $c_1$ is 
a $k_1$-proper coloring of $G_1$, there is a proper path $P_1$ in $G_1$ connecting $u$ and $v_1$.  
Since $c_2$ is a $k_2$-proper coloring of $G_2$, there is a proper path $P_2$ in $G_2$ connecting 
$v$ and $v_2$. As $c_1(v_1v_2)=c_2(v_1v_2)$, then we know that $P=uP_1v_2v_1P_2v$ is a proper path 
connecting $u$ and $v$ in $G$. Therefore, we have that $pc(G)\le k_1$, and the proof is thus complete.
\end{proof}

Let $B\subseteq E$ be the set of cut-edges of a graph $G$.  Let $\mathcal{C}$ denote the 
set of connected components of $G'= (V;E\setminus B)$. There are two types of elements in $\mathcal{C}$, 
singletons and connected bridgeless subgraphs of $G$. Let $\mathcal{S}\subseteq\mathcal{C}$ denote 
the singletons and let $\mathcal{D}=\mathcal{C}\setminus\mathcal{S}$. Each element of $\mathcal{S}$ is, therefore, 
a vertex, and each element of $\mathcal{D}$ is a connected bridgeless subgraph of $G$.  Contracting each element 
of $\mathcal{D}$ to a vertex, we obtain a new graph $G^*$. It is easy to see that $G^*$ is a tree, and 
the edge set of $G^*$ is $B$. According to the above notations, we have the following theorem.

\begin{theorem}\label{th3.4}
If $G$ is a connected graph, then $pc(G)\le \max\{3, \Delta(G^*)\}$.
\end{theorem}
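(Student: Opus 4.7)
The plan is to construct a proper-path coloring of $G$ with $t = \max\{3, \Delta(G^*)\}$ colors by combining colorings of each bridgeless component with a proper edge-coloring of the tree $G^*$. For each $D \in \mathcal{D}$, Theorem \ref{th2.1} supplies a $3$-edge-coloring $c_D$ (using colors in $\{1,2,3\}$) under which $D$ has the strong property. Since $G^*$ is a tree, Lemma \ref{lem2.2} gives it a proper edge-coloring with $\chi'(G^*) = \Delta(G^*)$ colors, which we may draw from $\{1, 2, \ldots, t\}$; this assigns a color to each cut-edge of $G$, distinct at each incident super-node. Let $c$ be the resulting edge-coloring of $G$, which uses at most $t$ colors.

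To verify that $c$ is a proper-path coloring, I would route each pair $u, v$ along the unique tree path $\tilde u = N_0, N_1, \ldots, N_k = \tilde v$ in $G^*$, with cut-edges $e_i = x_i y_i$ between $N_{i-1}$ and $N_i$ (with $x_i \in N_{i-1}$ and $y_i \in N_i$). Inside each super-node $N_i$ we pick a proper sub-path from $y_i$ to $x_{i+1}$ (trivial when $N_i$ is a singleton or when $y_i = x_{i+1}$) and concatenate with the $e_i$'s. At the two extreme super-nodes $N_0$ and $N_k$, only a one-sided color avoidance is needed at the end adjacent to $e_1$ (respectively $e_k$), which the strong property of the local coloring supplies. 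For a singleton intermediate $N_i$ the sub-path is trivial and only $c(e_i) \neq c(e_{i+1})$ is required, which holds because $e_i$ and $e_{i+1}$ are adjacent in $G^*$.

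The main obstacle is the constraint at each interior bridgeless super-node $N_i$: the sub-path from $y_i$ to $x_{i+1}$ must have both its first color distinct from $c(e_i)$ and its last color distinct from $c(e_{i+1})$. The strong property alone guarantees only two proper paths with differing starts and differing ends, which is not sufficient to jointly avoid two prescribed colors. To get around this I would recast the argument as an induction on $|V(G^*)|$, peeling off a leaf $L$ of $G^*$ together with its attaching cut-edge $e$: remove $L$ and $e$ to obtain $G'$ with $|V(G'^*)| < |V(G^*)|$ and $\Delta(G'^*) \le \Delta(G^*)$, apply the inductive hypothesis to $G'$, and re-glue $L$ by extending the coloring with the $3$-colored bridgeless block (or the added singleton) via Lemma \ref{lem2.5} and Corollary \ref{cor2.6}. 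Each such gluing introduces only a one-sided avoidance at the junction, which is handled by the strong property, so the induction delivers the bound $pc(G) \le t$.
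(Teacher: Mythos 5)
You correctly identify the central difficulty---at an interior bridgeless block a connecting sub-path must avoid one prescribed color at its start and another at its end, and the strong property (two paths with merely distinct starts and distinct ends) cannot guarantee this, since in the worst case one path fails at the start and the other at the end. However, your proposed repair does not close this gap. The inductive hypothesis you invoke is only $pc(G')\le t$; it does not supply a coloring of $G'$ with the strong property, and indeed no coloring of $G'$ can have the strong property once $G'$ contains a cut-edge or a pendant vertex (every proper path leaving a pendant vertex is forced to begin with the same edge, so two paths with distinct start colors cannot exist). Consequently Lemma \ref{lem2.5} and Corollary \ref{cor2.6} are not applicable to your re-gluing step: both require the strong property of the graph being extended, and moreover they only permit attaching two vertices or two pendant paths, not an arbitrary bridgeless block $L$ onto an arbitrary graph $G'$. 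The ``one-sided avoidance at the junction'' you rely on is exactly what the bare statement $pc(G')\le t$ fails to deliver: you need, for every target $v$, a proper path in $G'$ from the attachment vertex $y$ whose first color differs from $c(e)$, and nothing in your induction controls that first color (for instance, when $y$ is a singleton of $\mathcal{C}$, all paths from $y$ toward a fixed side begin with the same forced cut-edge color).

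The paper closes the gap with two ideas that are missing from your argument. First (Lemma \ref{le3.3}), each cut-edge is retained, as a pendant edge, in \emph{both} contracted pieces it separates; any proper path reaching the contracted pendant vertex is then forced to end with that cut-edge, so its penultimate color automatically differs from $c(e)$ and the two halves concatenate into a proper path---this replaces your leaf-peeling induction and needs only proper connectivity of the pieces, not the strong property. Second (Lemma \ref{lem3.2}), within a single piece $A_0$ (a strong-property core plus $|C(A)|$ pendant cut-edges) the two-sided avoidance is resolved by reversing the quantifiers: one first fixes proper paths between the attachment vertices and then \emph{chooses} the colors of the pendant edges to be compatible with them (using colors outside $\{1,2,3\}$ for all but three of the pendant edges), rather than trying to find paths that avoid predetermined colors. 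Your proof would need to reconstruct both steps, or an equivalent, carefully strengthened induction hypothesis, to be complete.
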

\begin{proof}
For an arbitrary element $A$ of $\mathcal{C}$, $A$ is either a singleton or a connected bridgeless subgraph  
of $G$. Let $C(A)$ be the set of cut-edges in $G$ that has an end-vertex in $A$. It is obvious that 
$|C(A)|\le \Delta(G^*)$. We use $A_0$ to denote the subgraph of $G$ obtained from $A$ by adding all the 
edges of $C(A)$ to $A$. If $A$ is a singleton, we have that $pc(A_0)=|C(A)|\le \max\{3, \Delta(G^*)\}$. 
Otherwise, from Theorem \ref{th2.1}, we know that $pc(A)\le 3$ and there is a coloring $c$ of $A$ such that 
$A$ has the strong property under $c$. Then by Lemma \ref{lem3.2}, we have that 
$pc(A_0)\le \max\{3, |C(A)|\}\le \max\{3, \Delta(G^*)\}$. Hence, by Lemma \ref{le3.3}, we can obtain 
that $pc(G)= \max_{A\in \mathcal{C}} pc(A_0)\le \max\{3, \Delta(G^*)\}$.
\end{proof}

Let $rK_t$ be the disjoint union of $r$ copies of the complete graph $K_t$, We use $S_r^t$ to denote 
the graph obtained from $rK_t$ by adding an extra vertex $v$ and joining $v$ to one vertex of each $K_t$.

\begin{coro}
If $G$ is a connected graph with $n$ vertices and minimum degree $\delta\ge 2$, then  $pc(G)\le \max\{3, \frac{n-1}{\delta+1}\}$. 
Moreover, if $\frac{n-1}{\delta+1} >3$, and $n\ge \delta(\delta+1)+1 $, we have that $pc(G)=\frac{n-1}{\delta+1}$ 
if and only if $G\cong S_r^t$, where $t-1=\delta$ and $rt+1=n$.
\end{coro}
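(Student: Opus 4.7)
My plan is to combine Theorem \ref{th3.4} with a bound on $\Delta(G^*)$ coming from the minimum-degree condition. The key lemma I need is that every leaf $L$ of the tree $G^*$ contains at least $\delta+1$ vertices of $G$. Since $\delta\ge 2$, any singleton $\{v\}\in\mathcal{S}$ satisfies $\deg_{G^*}(\{v\})=\deg_G(v)\ge 2$, so singletons cannot be leaves of $G^*$; hence every leaf $L$ lies in $\mathcal{D}$ and has exactly one cut-edge attached. Summing degrees over $u\in L$ then gives $|L|\,\delta\le 2|E(L)|+1\le |L|(|L|-1)+1$, so $|L|^2-|L|(\delta+1)+1\ge 0$. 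A direct case-check rules out $|L|\le \delta$, so $|L|\ge \delta+1$.

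Next I choose $A\in\mathcal{C}$ with $\deg_{G^*}(A)=\Delta(G^*)=:d$. Removing $A$ from $G^*$ leaves $d$ subtrees $T_1,\ldots,T_d$, and each $T_i$ must contain at least one leaf of $G^*$. Writing $n_i$ for the number of vertices of $G$ lying in elements of $T_i$, the lemma above gives $n_i\ge \delta+1$, hence $n\ge |A|+\sum_{i=1}^d n_i\ge 1+d(\delta+1)$ and thus $d\le (n-1)/(\delta+1)$. Applying Theorem \ref{th3.4} then yields $pc(G)\le \max\{3,(n-1)/(\delta+1)\}$.

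For the characterization, suppose $pc(G)=(n-1)/(\delta+1)>3$ and $n\ge \delta(\delta+1)+1$. All the above inequalities must be tight: $|A|=1$, so $A$ is a singleton; each $T_i$ must consist of a single element $L_i$ with $|L_i|=\delta+1$; and tightness in $(\delta+1)\delta\le 2|E(L_i)|+1\le (\delta+1)\delta$, combined with $(\delta+1)\delta$ being even, forces $|E(L_i)|=\binom{\delta+1}{2}$, so $L_i\cong K_{\delta+1}$. Therefore $G$ is one vertex joined by $r=(n-1)/(\delta+1)$ cut-edges to $r$ disjoint copies of $K_{\delta+1}$, i.e.\ $G\cong S_r^t$ with $t=\delta+1$; the hypothesis $n\ge \delta(\delta+1)+1$ is precisely what guarantees $r\ge \delta$, so that $S_r^t$ actually has minimum degree $\delta$.

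Conversely, for $G=S_r^t$ with $t=\delta+1$ one has $\Delta(G^*)=r$, so $pc(G)\le r$ by Theorem \ref{th3.4}. For the matching lower bound, any two vertices from distinct cliques are joined only through the central vertex, and any such path traverses two of the $r$ central cut-edges, which must carry distinct colors; ranging over all pairs of cliques forces these $r$ cut-edges to have pairwise distinct colors, giving $pc(S_r^t)\ge r$. The main delicate point will be the tight equality analysis: squeezing the edge bound down to $K_{\delta+1}$ requires the parity observation above, and one must check that the hypothesis $n\ge \delta(\delta+1)+1$ is exactly what makes $S_r^t$ attain minimum degree $\delta$.
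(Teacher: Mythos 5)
Your proposal is correct and follows essentially the same route as the paper: show every leaf of $G^*$ arises from a bridgeless component with at least $\delta+1$ vertices, deduce $\Delta(G^*)\le \frac{n-1}{\delta+1}$, apply Theorem \ref{th3.4}, and characterize equality via tightness. You simply supply details the paper leaves implicit (the degree-sum and parity arguments forcing each leaf component to be $K_{\delta+1}$, the explicit lower bound $pc(S_r^t)\ge r$, and the role of $n\ge\delta(\delta+1)+1$), and you correctly use $t=\delta+1$ where the paper's proof misstates $t=\delta$.
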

\begin{proof}
Since the minimum degree of $G$ is $\delta\ge 2$, we know that each leaf of $G^*$ is obtained by contracting 
an element with at least $\delta+1$ vertices of $\mathcal{D}$. Therefore, $\mathcal{D}$ has at 
most $\frac{n-1}{\delta+1}$ such elements, and so, one can see that $\Delta(G^*)\leq  \frac{n-1}{\delta+1}$. 
From Theorem \ref{th3.4}, we know that $pc(G)\le \max\{3, \frac{n-1}{\delta+1}\}$.

If $\frac{n-1}{\delta+1} >3$ and $pc(G)=\frac{n-1}{\delta+1}$, one can see that $G^*$ is a star with 
$\Delta(G^*)=\frac{n-1}{\delta+1}$, and each leaf of $G^*$ is obtained by contracting an element with 
$\delta+1$ vertices of $\mathcal{D}$, that is, $G\cong S_r^t$,  where $t=\delta$ and $rt+1=n$. On the other 
hand, if $G\cong S_r^t$, where $t=\delta$ and $rt+1=n$, we can easily check that $pc(G)=r=\frac{n-1}{\delta+1}$.
 \end{proof}

\section{Graphs with $2$-proper connection number two}

At the beginning of this section, we list an apparent sufficient condition for graphs 
to have proper connection number two.

\begin{pro}\label{pro4.1}
 Let $G$ be a simple noncomplete graph on at least three vertices in which the minimum 
 degree is at least $n/2$, then $pc(G)=2$.
\end{pro}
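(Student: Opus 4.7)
The plan begins by observing that $pc(G)\ge 2$ automatically because $G$ is noncomplete, so all the work goes into exhibiting a proper-path $2$-coloring of $G$.

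The hypothesis $\delta(G)\ge n/2$ is exactly Dirac's condition, so I would first invoke Dirac's theorem to obtain a Hamilton cycle $C=v_1v_2\cdots v_nv_1$ of $G$. My goal is then to $2$-color the edges of $C$ so that every pair of vertices is joined by a proper path tracing along $C$; all chords of $G$ can then be colored arbitrarily (say, with color $1$) since they play no role in establishing proper connectivity.

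When $n$ is even, I would use the strictly alternating coloring $1,2,1,2,\ldots,1,2$ on the edges of $C$. Then every arc of $C$ is properly colored, so any two vertices are proper-connected via either direction around the cycle.

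When $n$ is odd, the alternating scheme fails to close up, and this is the main obstacle. I would address it by placing a single ``defect'' at the vertex $v_1$: color $v_1v_2,v_2v_3,\ldots,v_{n-1}v_n$ alternately as $1,2,1,\ldots,2$ and then set $c(v_nv_1)=c(v_1v_2)$. The key observation is that the color clash is confined to $v_1$, so for any $v_i,v_j$ with $i,j\ne 1$ the arc of $C$ between them that avoids $v_1$ is properly colored, while for any pair involving $v_1$ either arc starting (or ending) at $v_1$ is a proper path (the two equally colored cycle edges at $v_1$ create a conflict only for paths that \emph{pass through} $v_1$, not for those that start or end there). A short case check then confirms proper connectivity for every pair, completing the argument.
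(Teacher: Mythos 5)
Your argument is correct: Dirac's theorem yields a Hamilton cycle, the alternating coloring (with the single defect at $v_1$ when $n$ is odd) makes that cycle proper connected, and noncompleteness gives the lower bound $pc(G)\ge 2$. The paper states this proposition as ``apparent'' and offers no proof, but your route is exactly the intended one, amounting to $pc(G)\le pc(C_n)=2$ via Lemma \ref{lem2.1} with a direct verification that $pc(C_n)=2$.
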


We should mention that the condition $\delta(G)\ge n/2$ is quite rough. In \cite{Borozan}, 
the authors gave a much better result for graphs with appreciable quantity of vertices to 
have proper connection number two. They proved that if $G$ is connected noncomplete graph 
with $n\ge 68$ vertices, and $\delta\ge n/4$, then $pc(G)=2$.

It is easy to find that the $2$-proper connection number of any simple 2-connected graph is at least 2, 
and every complete graph on at least $4$ vertices evidently has the property that $pc_2(K_n)=2$. 
But suppose that our graph has considerably fewer edges. In particular, we may ask how large the minimum 
degree of $G$ must be in order to guarantee the property that $pc_2(G)=2$. Motivated by Proposition \ref{pro4.1}, 
we consider the condition $\delta \ge n/2$ and get the following theorem.

\begin{theorem}
Let $G$ be a connected graph with $n$ vertices and minimum degree $\delta$. If $\delta\ge n/2$ and $n\ge 4$, then $pc_2(G)=2$.
\end{theorem}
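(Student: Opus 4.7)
The lower bound $pc_2(G)\ge 2$ comes for free: under a single colour, no path of length $\ge 2$ is proper, so one cannot exhibit two internally vertex-disjoint proper paths between a pair of non-adjacent vertices. The plan for the upper bound is to exhibit a suitable spanning subgraph and invoke the obvious monotonicity $pc_2(G)\le pc_2(H)$ for $H$ spanning: any valid $2$-edge-coloring of $H$ extends to $G$ by painting the extra edges arbitrarily, and the two internally disjoint proper paths found in $H$ remain proper and internally disjoint in $G$. I will treat the parities of $n$ separately.

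In the easy case $n$ even, Dirac's theorem supplies a Hamilton cycle $C=v_1v_2\cdots v_nv_1$, and because $n$ is even I can colour the edges of $C$ alternately by two colours. This is a proper edge-coloring of $C$, so the two arcs of $C$ between any pair of vertices are proper paths; hence $pc_2(C)=2$ and monotonicity gives $pc_2(G)\le 2$. The real work is the odd case, where the plan is to build the gadget of Lemma~\ref{lem4.2}, namely a Hamilton cycle together with a chord of cyclic length two, as a spanning subgraph of $G$. First I would observe that $G$ cannot be bipartite: $\delta\ge n/2$ in a bipartite graph would force both parts to have size $n/2$, impossible for odd $n$. Since $\delta\ge n/2$ also yields $|E(G)|\ge n^2/4$, Bondy's pancyclicity theorem (the exception $K_{n/2,n/2}$ being ruled out by parity) forces $G$ to contain a cycle $C'=v_1v_2\cdots v_{n-1}v_1$ of length $n-1$. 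Let $v_n$ be the missing vertex. Because $\deg_G(v_n)\ge n/2>(n-1)/2$ exceeds the independence number of $C'$, two consecutive vertices $v_j,v_{j+1}$ of $C'$ are both neighbours of $v_n$. Rerouting $C'$ through $v_n$ then produces a Hamilton cycle $H=v_1\cdots v_jv_nv_{j+1}\cdots v_{n-1}v_1$ of $G$ on which the edge $v_jv_{j+1}$ is a length-two chord; thus $H+v_jv_{j+1}$ is a spanning subgraph of $G$ of exactly the form governed by Lemma~\ref{lem4.2}, giving $pc_2(G)\le pc_2(H+v_jv_{j+1})=2$.

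The main obstacle is the odd case, specifically producing the length-two chord. A naive attempt that starts from Dirac's Hamilton cycle can fail, since an arbitrary Hamilton cycle of $G$ need not carry any triangular chord (the circulant $C_7(1,3)$ is a concrete example whose standard Hamilton cycle $0,1,\ldots,6$ has no length-two chord at all). The trick I would rely on is to pull in Bondy's pancyclicity theorem and work with the shorter cycle $C_{n-1}$ rather than a Hamilton cycle, because splicing the missing vertex between two consecutive neighbours on $C_{n-1}$ automatically manufactures the required length-two chord. The rest is bookkeeping: the parity exclusion of $K_{n/2,n/2}$ and the independence-number pigeonhole on $C'$ are both routine.
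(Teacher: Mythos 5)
Your proof is correct and follows essentially the same route as the paper: split on the parity of $n$, and in the odd case reduce to the gadget of Lemma~\ref{lem4.2} by finding a cycle on $n-1$ vertices together with two edges from the missing vertex to consecutive vertices of that cycle. The only difference is how the $(n-1)$-cycle is produced --- the paper simply applies Dirac's theorem to $G-v_n$ (whose minimum degree is still at least $(n-1)/2$), which avoids invoking Bondy's pancyclicity theorem and the parity exclusion of $K_{n/2,n/2}$.
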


\begin{proof}
Since $\delta\ge n/2$, we know that there exists a Hamiltonian cycle $C=v_1v_2\ldots v_n$ in $G$. If $n$ is even, 
one has that $pc_2(G)\leq pc_2(C_n)=2$. Hence, we only need to consider the case that $n=2k+1$. Let $H=G-v_n$, 
one has that $d_H(v_i)\geq d_G(v_i)-1\geq k=|V(H)|/2$. Hence, there exists a Hamiltonian cycle $C'=v_1'v_2'\ldots v_{2k}'$ 
in $H$. As $d_G(v_n)\geq k+1$, one can see that there is an edge, say $v_1'v_2'$, such that $v_nv_1', v_nv_2'\in E(G)$. 
Hence, there is a spanning subgraph $G'$ of $G$ with $E(G')=E(C')\cup \{v_nv_1', v_nv_2'\}$. By Lemma \ref{lem4.2}, 
we have that $pc_2(G)\leq pc_2(G')=2$, and so the proof is complete.
\end{proof}

\noindent \textbf{Remark}: The condition $\delta\geq n/2$ is best possible. In fact, we can find graphs with minimum degree less 
than $n/2$ which is not $2$-connected, and so we cannot calculate the $2$-proper connection number. For example, 
let $G=K_1\vee(2K_k)$. We know that $\delta(G)=k<|V(G)|/2$, whereas $pc_2(G)$ does not exists.

Though the condition on the minimum degree can not be improved, we may consider some weaker conditions. We need an important 
conclusion which can be found in \cite{bb}. We use $cir(G)$ to denote the circumference (length of a longest cycle) of $G$.

\begin{lemma}\label{lem4.3}\cite{bb}
If $G=G(n,m)$ and $m\geq n^2/4$, then $G$ contains a cycle $C_r$ of length $r$ for each $3\le r\le cir(G)$.
\end{lemma}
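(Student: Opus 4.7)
The plan is to adapt a classical chord-distribution argument for weakly pancyclic graphs. Let $c=cir(G)$ and fix a longest cycle $C=u_1u_2\cdots u_cu_1$. For each $r\in\{3,4,\ldots,c\}$, the goal is to exhibit a cycle of length $r$; the case $r=c$ is immediate, so assume $r<c$.

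First, I would use the maximality of $C$ together with the edge-density hypothesis $m\geq n^2/4$ to constrain the structure. A vertex $x\in V(G)\setminus V(C)$ cannot have two consecutive vertices of $C$ as neighbors, since otherwise splicing $x$ in would enlarge $C$; more generally, the $C$-neighbors of $x$ must be spread apart on $C$. Bounding $|E(V(C),V(G)\setminus V(C))|$ and $|E(V(G)\setminus V(C))|$ in this way, one concludes that the bulk of the $m$ edges lie inside $V(C)$, so in particular $C$ has at least $m-O(n)$ chords.

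Next I would exploit the chord structure: every chord $u_iu_j$ splits $C$ into two arcs of lengths $\ell$ and $c-\ell$, yielding smaller cycles of lengths $\ell+1$ and $c-\ell+1$. So the task reduces to showing that every arc-distance $\ell\in\{2,3,\ldots,\lfloor c/2\rfloor\}$ is realized by some chord. I would set up a double-counting/degree-sum argument: letting $N(\ell)$ be the number of chords of arc-distance $\ell$, the total $\sum_\ell N(\ell)$ is at least $m-n\geq n^2/4-n$; if some $\ell_0$ were missing, a careful upper bound on $\sum_{\ell\neq\ell_0}N(\ell)$ would contradict this lower bound, except in the extremal configuration.

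The main obstacle is the extremal graph $G\cong K_{n/2,n/2}$: here $m=n^2/4$ and $cir(G)=n$, yet $G$ contains no odd cycle, so the assertion fails for $r=3$. A fully rigorous proof therefore requires either strict inequality $m>n^2/4$ (which rules out $K_{n/2,n/2}$ by Mantel's theorem) or an additional non-bipartiteness hypothesis, under which the chord-counting argument sketched above goes through. In the intended application the result is invoked in a setting where $G$ is non-bipartite, so the caveat is harmless.
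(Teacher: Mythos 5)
First, a point of comparison: the paper gives no proof of this lemma at all --- it is quoted verbatim from Bollob\'as's book \cite{bb}, where it is Bondy's theorem on cycle lengths in dense graphs --- so your sketch has to stand on its own, and as written it has two genuine gaps. The first is the claim that maximality of the longest cycle $C$ forces ``the bulk of the $m$ edges'' to be chords of $C$, giving at least $m-O(n)$ chords. Maximality only prevents a vertex $x\notin V(C)$ from having two $C$-neighbours at arc-distance $1$ (and constrains certain pairs of outside vertices); a single outside vertex may still have up to $\lfloor c/2\rfloor$ neighbours on $C$, so the number of crossing edges can a priori be of order $n^2$, and reducing to the case where almost all edges are chords is a substantive part of the theorem, not a one-line observation. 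The second and more serious gap is the chord-distribution count. If the chords of one arc-distance $\ell_0$ are all absent, you lose at most $c\le n$ potential chords out of roughly $\binom{c}{2}$, so the natural upper bound on $\sum_{\ell\neq\ell_0}N(\ell)$ is still about $\binom{c}{2}-2c$, far above $n^2/4-n$; no contradiction can come out of this count, since the hypothesis $m\ge n^2/4$ supplies only about half of all possible edges. Bondy's actual argument is different in kind: fixing the target length $r$ and the cycle $v_1v_2\cdots v_c$, one shows that if $G$ contains no $C_r$ then $N(v_i)$ and a suitable cyclic shift of $N(v_{i+1})$ must be disjoint, whence $d(v_i)+d(v_{i+1})\le n$ for every $i$; summing over $i$ gives $4m\le n^2$, and the equality analysis produces $K_{n/2,n/2}$. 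Extending from the Hamiltonian case to cycles up to $cir(G)$ then needs a separate (and nontrivial) step.

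On the positive side, your observation about $K_{n/2,n/2}$ is correct and worth recording: with the non-strict inequality $m\ge n^2/4$ the statement as printed fails for $r=3$ (and every odd $r$) when $G=K_{n/2,n/2}$, and the correct form of Bondy's theorem carries exactly this exception. The paper is not damaged by the omission, because in the only place Lemma \ref{lem4.3} is invoked $n$ is odd and equality in $m\ge n^2/4$ would force an $n/2$-regular graph, which cannot exist; hence $m>n^2/4$ there and the exceptional graph is excluded. But between your (correct) endpoints, the reduction to chords and the counting step would both have to be replaced by Bondy's pairing argument for the sketch to become a proof.
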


\begin{theorem}
Let $G$ be a simple graph on at least four vertices in which the degree sum
of any two nonadjacent vertices is at least $n$. Then $pc_2(G)=2$.
\end{theorem}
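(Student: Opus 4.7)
My plan is to reduce the claim to Lemma \ref{lem4.2} by exhibiting a spanning subgraph of $G$ isomorphic to an $n$-cycle together with a chord of length $2$. Since the hypothesis is precisely Ore's degree-sum condition, Ore's theorem furnishes a Hamiltonian cycle $C=v_1v_2\cdots v_nv_1$ of $G$. If $n$ is even, then $C$ itself is such a spanning subgraph and $pc_2(G)\le pc_2(C_n)=2$; the substance of the proof is therefore the case $n$ odd.

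Assume $n$ is odd. I would first ask whether $C$ has any chord of length $2$. If some $v_iv_{i+2}$ is an edge of $G$, then $C\cup\{v_iv_{i+2}\}$ is, after cyclic relabeling, precisely the graph in Lemma \ref{lem4.2} and we are done. Otherwise every pair $\{v_i,v_{i+2}\}$ is non-adjacent, so Ore yields $d(v_i)+d(v_{i+2})\ge n$ for each $i$. Because $n$ is odd, the shift $i\mapsto i+2$ is a permutation of $\{1,\ldots,n\}$, so summing these $n$ inequalities counts each degree exactly twice and produces $4m\ge n^2$, i.e.\ $m\ge n^2/4$. Since $G$ is Hamiltonian, $cir(G)=n$, so Lemma \ref{lem4.3} now supplies cycles of every length from $3$ up to $n$; in particular an $(n-1)$-cycle $C^*$. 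Let $w$ be the unique vertex of $G$ outside $C^*$; if $w$ has two consecutive neighbors on $C^*$, then $C^*$ together with those two $w$-edges is exactly the spanning subgraph demanded by Lemma \ref{lem4.2} and the proof is complete.

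The main obstacle is therefore to guarantee that some $(n-1)$-cycle has this consecutive-neighbor property. If the neighbors of $w$ on $C^*$ were pairwise non-consecutive, then $d(w)\le (n-1)/2$. My plan is to rule this out by picking a vertex $u$ with $d(u)\ge (n+1)/2$ --- one must exist because $2m\ge n^2/2$ and all degrees are integers with $n$ odd --- and arranging for $u$ to play the role of $w$ by showing that $G-u$ is itself Hamiltonian. Then $u$ has at least $(n+1)/2$ neighbors on the resulting $(n-1)$-cycle, and the pigeonhole principle forces two of them to be consecutive. The technical crux is establishing Hamiltonicity of $G-u$: the inherited inequality $d_{G-u}(x)+d_{G-u}(y)\ge n-2$ on non-adjacent pairs of $G-u$ is one short of Ore's condition on the $(n-1)$-vertex graph $G-u$, so closing this gap by exploiting the large degree of $u$ is the most delicate step. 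A back-up approach, if the above is awkward to make sharp, is a rotation argument along $C$: since $v_1v_3\notin E(G)$ and Ore holds, $v_1$ and $v_3$ share at least two common neighbors, so some $v_k$ with $k\neq 2$ is adjacent to both, and splicing the $2$-path $v_1v_kv_3$ into $C$ can be used to produce a new Hamiltonian cycle that does carry a chord of length $2$.
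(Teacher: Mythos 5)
Your skeleton matches the paper's up to the critical last step: Ore gives a Hamiltonian cycle, the even case is immediate, a chord of length $2$ reduces to Lemma \ref{lem4.2}, and in the chordless case your summation over the $n$ pairs $\{v_i,v_{i+2}\}$ is a clean (and arguably slicker than the paper's) way to get $m\ge n^2/4$ and hence, via Lemma \ref{lem4.3}, an $(n-1)$-cycle $C^*$. But the proof then stalls exactly where you say it does, and that is a genuine gap, not a routine detail. You need the vertex $w$ off $C^*$ to have two \emph{consecutive} neighbours on $C^*$, and neither of your two plans delivers this. The first plan requires $G-u$ to be Hamiltonian for a vertex $u$ of degree at least $(n+1)/2$; as you note, non-adjacent pairs in $G-u$ only satisfy $d_{G-u}(x)+d_{G-u}(y)\ge (n-1)-1$, which is one short of Ore on $n-1$ vertices, and you give no argument closing that gap. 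The backup plan is worse: having a common neighbour $v_k$ of $v_1$ and $v_3$ with $k\ne 2$ does not let you ``splice $v_1v_kv_3$ into $C$'' --- replacing $v_2$ by $v_k$ on the cycle either drops $v_2$ or visits $v_k$ twice, so no new Hamiltonian cycle is produced. As written, the odd case is not proved.

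It is worth seeing how the paper escapes this trap: it never asks for consecutive neighbours. It first disposes of $\delta\le 3$ directly (a minimum-degree vertex $v_n$ has a non-neighbour $v_j$ of degree at least $n-3$, which must see $v_{j-2}$ or $v_{j+2}$, giving the length-$2$ chord), so that in the remaining case $\delta\ge 4$ and the vertex $v$ off the even $(n-1)$-cycle $C'$ has at least four neighbours $u_1,u_i,u_j,u_k$ there. It then writes down an explicit $2$-colouring of $C'$ plus these four spokes and verifies by hand that every vertex of $C'$ is joined to $v$ by two disjoint proper paths. In other words, the paper replaces your missing structural claim (a spanning ``cycle plus short chord'') by a direct colouring argument that needs only four spokes in arbitrary position. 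If you want to salvage your approach, you should either prove the Hamiltonicity of $G-u$ (which would need a genuinely new argument) or switch to the paper's four-spoke colouring, for which your case analysis already supplies all the hypotheses once you add the $\delta\le 3$ reduction.
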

\begin{proof}
Since the degree sum of any two nonadjacent vertices of $G$ is at least $n$, we know that $G$ is Hamiltonian. Suppose 
that a Hamiltonian cycle of $G$ is $C=v_1v_2\ldots v_n$. If $n$ is even or $n=5$, it is obvious that $pc_2(G)\leq 2$.

If $\delta\le3$, suppose, without loss of generality, that $v_n$ is the vertex which has minimum degree. There exists a 
$2<j<n-2, j\neq i$ such that $v_j$ and $v_n$ are not adjacent. We have that $d(v_j)\geq n-d(v_n)\ge n-3$, and so we know 
that $\{v_{j-2}, v_{j+2}\}\cap N(v_j)\neq \emptyset$, where the subscripts are modulus $n$. Hence, we can see that $pc_2(G)\leq 2$ 
by Lemma \ref{lem4.2}. In what follows of the proof, we only consider the case that $n$ is an odd number which is larger 
than 7 and $\delta\ge4$. To continue our proof, we need the following claim.

\noindent\textbf{Claim:} $G$ is pancyclic.

\noindent\emph{Proof of the Claim:} Let $\overline{E}$ be the edge set of $\overline{G}$. One can see that for any $uv\in \overline{E}$, 
$d_G(u)+d_G(v)\geq n$. It is obvious that
\begin{equation}\label{1}
\sum_{uv\in E\cup \overline{E}}(d_G(u)+d_G(v))=(n-1)\sum_{u\in V(G)}d_G(v)=2(n-1)m.
\end{equation}
One the other hand,
\begin{equation}\label{2}
 \sum_{uv\in E}(d_G(u)+d_G(v))=\sum_{u\in V(G)}d_G(u)^2\geq n(\sum_{u\in V(G)}d_G(u)/n)^2=4m^2/n,
\end{equation}
and
\begin{equation}\label{3}
 \sum_{uv\in \overline{E}}(d_G(u)+d_G(v))\geq ({n\choose 2}-m)n, 
\end{equation}
where the equality holds in (2) if and only if $G$ is a regular graph and the equality holds in (3) if and only if $d_G(u)+d_G(v)=n$ 
for each pair of nonadjacent vertices $u$ and $v$.

Thus we have 
\[
2(n-1)m\geq 4 m^2/n+({n\choose 2}-m)n,
\]
i.e.,
\[
(m-n^2/4)(m-{n\choose 2})\leq 0.
\]
Hence, we have that $m\geq n^2/4$, with equality holds if and only if $G$ is a regular graph with degree 
$n/2$. We know that $G$ is pancyclic from Lemma \ref{lem4.3}.

Assume that $C'=u_1u_2\ldots u_{n-1}$ is a cycle of $G$ with $n-1$ vertices and $v\not\in V(C')$. 
Without loss of generality, assume that $\{u_1, u_i, u_j, u_k\}\subseteq N(v)$ such that $1<i<j<k\le n-1$. 
Let $c(u_{2i-1}u_{2i})=1$ and $c(u_{2i}u_{2i+1})=2$ for $i=1,2,\ldots, \frac{n-1}{2}$, and let 
$c(vu_1)=c(u_{n-1}u_1), c(vu_i)=c(u_iu_{i+1}), c(vu_j)=c(u_{j-1}u_j), c(vu_k)=c(u_ku_{k+1})$. 
Now we prove that for any $x, y\in V(G)$, there are two disjoint proper paths connecting them. We only need 
to consider the case that $x=v, y=u_l\in V(C')$. If $1\leq l \leq k$, then $P_1=vu_1u_2\ldots u_l$ and 
$P_2=vu_ku_{k-1}\ldots u_l$ are two disjoint proper paths connecting them. If $k<l\leq n-1$, then 
$P_1=vu_iu_{i-1}\ldots u_1u_{n-1}\ldots u_l$ and $P_2=vu_ju_{j+1}\ldots u_l$ are two disjoint proper paths 
connecting them. Hence, we have that $pc_2(G)\leq 2$.
\end{proof}

\noindent \textbf{Remark}:
The condition that ``the degree sum of any two nonadjacent vertices of $G$ is at least $n$" cannot be improved. 
For example, $C_5$ and $K_1\vee(2K_k)$ have the property that the degree sum of any two nonadjacent vertices of 
is one less than their number of vertices, whereas $pc_2(C_5)=3$ and $pc_2(K_1\vee(2K_k)$ does not exist.

\section{Proper connection number of dense graphs}

In this section, we consider the following problem:

\noindent{\bf Problem 1.} For every $k$ with $1\le k\le n-1$, compute and minimize the function $f(n,k)$ with 
the following property: for any connected graph $G$ with $n$ vertices, if $|E(G)|\ge f(n,k)$, then $pc(G)\le k$.

In \cite{Kem}, this kind of question was suggested for rainbow connection number $rc(G)$, and in \cite{LLS}, 
the authors considered the case $k=3$ and $k=4$ for rainbow connection number $rc(G)$. We first show a lower bound 
for $f(n,k)$.
\begin{pro}
$f(n,k)\ge {n-k-1\choose2}+k+2.$
\end{pro}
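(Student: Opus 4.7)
The plan is to establish the lower bound by exhibiting, for every admissible pair $(n,k)$, a connected graph $G$ on $n$ vertices with exactly $\binom{n-k-1}{2}+k+1$ edges and $pc(G)\ge k+1$. Such a $G$ witnesses that the threshold cannot be met at $\binom{n-k-1}{2}+k+1$ edges, so by the definition of $f(n,k)$ we must have $f(n,k)\ge \binom{n-k-1}{2}+k+2$, which is exactly what is claimed.

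The construction I have in mind is a ``clique with a pendant star.'' Take a complete graph $K_{n-k-1}$ on a vertex set $\{c,w_1,\ldots,w_{n-k-2}\}$ and glue to it, at the single vertex $c$, a star with $k+1$ additional leaves $u_1,\ldots,u_{k+1}$; that is, add the $k+1$ new edges $cu_1,\ldots,cu_{k+1}$. The resulting graph $G$ is connected, has $n$ vertices, and has $\binom{n-k-1}{2}+(k+1)$ edges, as required. The essential structural feature is that $c$ is a cut vertex whose removal isolates each leaf $u_i$ from every other leaf and from the clique.

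To see that $pc(G)\ge k+1$, fix any edge-coloring of $G$ and consider two distinct leaves $u_i,u_j$. The only $u_i$--$u_j$ path in $G$ is the length-two path $u_icu_j$, since every other vertex lies in the clique block and the only way from a leaf to that block passes through $c$. For this path to be proper we need the colors of $u_ic$ and $u_jc$ to differ, and since this must hold simultaneously for all pairs $i\ne j$ in $\{1,\ldots,k+1\}$, the $k+1$ leaf-edges incident to $c$ must receive $k+1$ pairwise distinct colors. Therefore $pc(G)\ge k+1>k$, as desired.

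There is no real obstacle here: the whole argument reduces to the cut-vertex structure forcing a star-like constraint at $c$, and the edge count is fixed by direct computation. The only point requiring a moment of care is the boundary behaviour---when $n-k-1\le 1$, the ``clique'' degenerates to a single vertex or to the empty graph, and $G$ simplifies to a star $K_{1,n-1}$ or a similar pendant structure; the edge count and the $pc(G)\ge k+1$ conclusion both remain correct in these degenerate cases by the same leaf argument, so the stated inequality holds uniformly for $1\le k\le n-1$.
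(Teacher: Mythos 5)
Your construction is exactly the paper's extremal graph $G_k$ (a $K_{n-k-1}$ with a star $K_{1,k+1}$ identified at one vertex), and your cut-vertex argument simply fills in the verification that the paper dismisses with ``it can be easily checked that $pc(G_k)=k+1$.'' The proposal is correct and takes essentially the same approach as the paper.
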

\begin{proof}
We construct a graph $G_k$ as follows: Take a $K_{n-k-1}$ and a star $S_{k+2}$. Identify the center-vertex of 
$S_{k+2}$ with an arbitrary vertex of $K_{n-k-1}$. The resulting graph $G_k$ has order $n$ and size $E(G_k)=
{n-k-1\choose2}+k+1$. It can be easily checked that $pc(G_k)=k+1$. Hence, $f(n,k)\ge {n-k-1\choose2}+k+2$.
\end{proof}

\begin{lemma}\label{lem5.1}
Let $G$ be a graph with $n \ (n\geq 6)$ vertices and at least ${n-1\choose2}+3$ edges. Then for any $u, v\in V(G)$, 
there is a 2-connected bipartite spanning subgraph of $G$ with $u, v$ in the same part.
\end{lemma}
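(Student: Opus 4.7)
The plan is to prove the lemma by induction on $n$, using the key observation that $|E(G)| \ge \binom{n-1}{2}+3 = \binom{n}{2}-(n-4)$ is equivalent to $|E(\bar G)| \le n-4$; in particular $\delta(G) \ge 3$.

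For the inductive step $n \ge 7$, I would seek a vertex $w \notin \{u, v\}$ with $d_G(w) \le n-2$. Whenever such a $w$ exists, $G - w$ has $|E(G-w)| \ge \binom{n-1}{2}+3 - (n-2) = \binom{n-2}{2}+3$ edges, so it satisfies the hypothesis of the lemma (with $n-1 \ge 6$); the inductive hypothesis yields a $2$-connected bipartite spanning subgraph $H'$ of $G - w$ with parts $(X', Y')$ and $u, v \in X'$. Since $d_G(w) \ge 3$, pigeonhole provides two distinct neighbors $z_1, z_2$ of $w$ in a common part, say $Y'$; I then form $H$ from $H'$ by adding $w$ (placed in the opposite part $X'$) together with the edges $wz_1$ and $wz_2$. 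This keeps $u, v$ on the same side and preserves bipartiteness. The graph $H$ is $2$-connected because $H - w = H'$ is $2$-connected, and for any $z \in V(H')$ the subgraph $H - z$ contains the connected graph $H' - z$ together with $w$, which still has at least one of $z_1, z_2$ as a neighbor in $H' - z$.

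The remaining scenario is that every $w \ne u, v$ has $d_G(w) = n-1$; then $\bar G \subseteq \{uv\}$ and so $G \in \{K_n, K_n - uv\}$, in which case the partition $X = \{u, v\}$, $Y = V \setminus \{u, v\}$ gives $B_G[X, Y] = K_{2, n-2}$, which is $2$-connected for $n \ge 4$. For the base case $n = 6$ one has $|E(\bar G)| \le 2$, so $\bar G$ is empty, a single edge, a matching of size $2$, or a path $P_3$; in each configuration it is straightforward to choose a bipartition with $u, v \in X$ and $|X|, |Y| \ge 2$ in which every non-edge of $G$ has both endpoints on the same side, making $B_G[X, Y]$ a complete bipartite graph $K_{|X|, |Y|}$ and hence $2$-connected.

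The technically tight point of the argument is the threshold $d_G(w) \le n-2$, matched exactly by $\binom{n-1}{2} - \binom{n-2}{2} = n-2$; this is what forces the clean split between the inductive case and the near-complete case $G \in \{K_n, K_n - uv\}$. The other subtlety is ensuring that $w$ can be reinserted consistently with the bipartition of $H'$, and this is precisely what $d_G(w) \ge 3$ combined with pigeonhole provides.
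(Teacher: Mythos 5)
Your proof is correct, but it takes a genuinely different route from the paper's. The paper argues non-inductively: it first notes $|E(\overline{G})|\le n-4$, shows $|N(u)\cap N(v)|\ge 2$ (so $B_G[N(u)\cap N(v),\{u,v\}]$ is a $2$-connected bipartite subgraph with $u,v$ together), and then takes a vertex-maximal such subgraph $H=B_G[X,Y]$ and shows via the count $(n-|V(H)|)(|V(H)|-2)\ge n-3>|E(\overline{G})|$ that any leftover vertex has two neighbours in one part and can be absorbed, so $H$ must be spanning. You instead induct on $n$: delete a vertex $w\notin\{u,v\}$ of degree at most $n-2$ (the edge count $\binom{n-1}{2}+3-(n-2)=\binom{n-2}{2}+3$ is exactly tight), apply the hypothesis to $G-w$, and reinsert $w$ using $\delta(G)\ge 3$ and pigeonhole; the degenerate case where no such $w$ exists forces $G\in\{K_n,K_n-uv\}$, handled by $K_{2,n-2}$. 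Both arguments hinge on the same counting fact $|E(\overline{G})|\le n-4$, but your "delete and reinsert" step and the paper's "grow a maximal subgraph" step are dual ways of spending that budget; the paper's version avoids a base case entirely, while yours avoids the maximality argument at the cost of the finite check at $n=6$ (which you state rather tersely, but which is routine to verify for the four possible shapes of $\overline{G}$). Your $2$-connectivity verification for $H'+w+\{wz_1,wz_2\}$ and the reinsertion consistent with the bipartition are both sound.
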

\begin{proof}
Let $\overline{G}$ be the complement of $G$. Then we have that $|E(\overline{G})|\leq n-4$. Let $S=N(u)\cap N(v)$, we have 
that $|S|\geq 2$. Since otherwise, $|S|\leq 1$, then one has that for any $w\in V(G)\setminus (S\cup \{u,v\})$,
either $uw\in E(\overline{G})$ or $vw\in E(\overline{G})$, and thus $|E(\overline{G})|\geq n-3$, which contradicts the fact 
that $|E(\overline{G})|\leq n-4$. Therefore, we know that $B_G[S, \{u,v\}]$ is a 2-connected bipartite subgraph of 
$G$ with $u, v$ in the same part.

Suppose that $H=B_G[X,Y]$ is a 2-connected bipartite subgraph of $G$ with $u, v$ in the same part 
and $H$ has as many vertices as possible. Then, if $V(G)\setminus V(H)\neq \emptyset$, one has that there exists a 
vertex $w\in V(G)\setminus V(H)$, such that $|N(w)\cap X|\geq 2$ or $|N(w)\cap Y|\geq 2$. Since otherwise,
$$|E(\overline{G})|\geq (n-|V(H_1)|)(|V(H_1)|-2)\geq n-3,$$
which contradicts the fact that $|E(\overline{G})|\leq n-4$. Then $w$ can be added to $X$ if $|N(w)\cap X|\geq 2$ or 
added to $Y$ otherwise, which contradicts the maximality of $H$. So, we know that $H$ is a 2-connected bipartite spanning 
subgraph of $G$ with $u, v$ in the same part, which completes the proof.
\end{proof}

\begin{lemma}\label{lem5.2}
Every 2-connected graph on $n$ $(n\ge 12)$ vertices with at least ${n-1\choose2}-5$ edges contains a 2-connected bipartite 
spanning subgraph.
\end{lemma}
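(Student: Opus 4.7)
The plan is to combine a maximality argument with an application of Lemma~\ref{lem5.1} to $G-w$ for a carefully chosen low-degree vertex. Let $H = B_G[X, Y]$ be a 2-connected bipartite subgraph of $G$ with $|V(H)|$ maximum; such $H$ exists because the density of $G$ forces, for example, a copy of $C_4 \subseteq G$. Set $W = V(G) \setminus V(H)$ and $k = |W|$; the goal is to show $k = 0$. Maximality is used as follows: if some $w \in W$ had two distinct neighbors in $X$, placing $w$ into $Y$ would yield a larger 2-connected bipartite subgraph (adding a vertex of degree at least two to a 2-connected graph preserves 2-connectedness), a contradiction. Hence every $w \in W$ satisfies $|N_G(w) \cap X| \le 1$ and $|N_G(w) \cap Y| \le 1$, and therefore contributes at least $|V(H)| - 2 = n - k - 2$ non-edges of $G$ to $V(H)$. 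Using the hypothesis $|E(\overline{G})| \le n + 4$, this gives $k(n-k-2) \le n+4$, which for $n \ge 12$ forces $k \in \{0, 1\}$ (with $k = 2$ possible only when $n = 12$).

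The main case is $k = 1$. Let $w$ be the unique missing vertex; the neighborhood bound forces $d_G(w) \le 2$, and 2-connectedness of $G$ forces $d_G(w) \ge 2$, so $d_G(w) = 2$ with $N_G(w) = \{a, b\}$. Now
\[
|E(G-w)| \ge \binom{n-1}{2} - 5 - 2 = \binom{n-1}{2} - 7 \ge \binom{n-2}{2} + 3,
\]
where the last inequality is exactly equivalent to $n \ge 12$. This is precisely the hypothesis that lets us invoke Lemma~\ref{lem5.1} on $G - w$ (which has $n - 1 \ge 11 \ge 6$ vertices) with the specified pair $u = a, v = b$. The lemma yields a 2-connected bipartite spanning subgraph $H'$ of $G - w$ with $a$ and $b$ in the same part; attaching $w$ to the opposite part along the two edges $wa$ and $wb$ then produces the desired 2-connected bipartite spanning subgraph of $G$.

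The residual case $k = 2$ occurs only when $n = 12$, and the bound $k(n-k-2) \le n + 4$ is then tight. This pins down an essentially unique structure: $G[V(H)] \cong K_{10}$, $w_1 w_2 \in E(G)$, and each $w_i$ has exactly one neighbor $x_i \in X$ and one $y_i \in Y$. A 2-connected bipartite spanning subgraph can be written down directly by placing $\{w_1, w_2\}$ together with any two vertices of $V(H) \setminus \{x_1, y_1, x_2, y_2\}$ on one side, and the remaining eight $K_{10}$-vertices on the other; the resulting graph is a complete bipartite graph on the ten $K_{10}$-vertices augmented by the two degree-two vertices $w_1, w_2$, which is routinely verified to be 2-connected. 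The main obstacle is really the tightness of the edge count in the $k = 1$ case: the chain of inequalities above leaves no slack at $n = 12$, and this is exactly why the hypothesis of the lemma must read $n \ge 12$ rather than something weaker.
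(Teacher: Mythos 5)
Your route is genuinely different from the paper's. The paper proves this lemma by induction on $n$: it removes a minimum-degree vertex $v$, handles $d(v)=2$ by applying Lemma~\ref{lem5.1} to $G-v$ with the two neighbours of $v$ as the prescribed pair, and handles $d(v)\ge 3$ by showing $G-v$ is still $2$-connected and dense enough for the induction hypothesis (the base case $n=12$ is simply asserted). You instead take a maximum $2$-connected bipartite subgraph $H$, use maximality to bound the degree into $H$ of every outside vertex by $2$, and count non-edges against $|E(\overline{G})|\le n+4$; Lemma~\ref{lem5.1} is then invoked only once, in the $k=1$ case. Your argument avoids both the induction and the unproved base case, and the individual steps you do carry out (the maximality argument, the degree count, the arithmetic $\binom{n-1}{2}-7\ge\binom{n-2}{2}+3$ for $n\ge 12$, and the attachment of $w$) are all correct.

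There is, however, a concrete gap in the case analysis. The inequality $k(n-k-2)\le n+4$ is a concave quadratic condition in $k$ and does \emph{not} force $k\in\{0,1,2\}$: it is also satisfied when $k$ is large, i.e.\ when $|V(H)|$ is small. Since a $2$-connected bipartite graph has at least $4$ vertices, one has $k\le n-4$, and for $n\ge 13$ concavity together with $f(2)=f(n-4)=2n-8>n+4$ does exclude everything except $k\in\{0,1\}$; but at $n=12$ the value $k=8$ (that is, $|V(H)|=4$, $H\cong C_4$) satisfies $8\cdot 2=16=n+4$ and is not covered by your argument. This case must be addressed before the proof is complete. It can be: tightness then forces every non-edge of $G$ to join $W$ to $V(H)$, so $G[W]\cong K_8$, which contains a spanning $K_{4,4}$ --- a $2$-connected bipartite subgraph on $8>4$ vertices, contradicting the maximality of $H$. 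With that paragraph added (and the claim ``forces $k\in\{0,1\}$'' corrected to reflect the $k\le n-4$ reasoning), your proof is complete.
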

\begin{proof}
The result is trivial if $G$ is complete. We will prove our result by induction on $n$ for noncomplete graphs. First, 
if $|V(G)|=12$ and $|E(G)|\geq 50$, one can find a $2$-connected bipartite spanning subgraph of $G$. So we suppose 
that the result holds for all 2-connected graphs on $n_0$ $(13<n_0<n)$ vertices with at least ${n_0-1\choose2}-5$ edges. 
For a 2-connected graph $G$ on $n$ vertices with $|E(G)|\ge {n-1\choose2}-5$, let $v$ be a vertex with minimum degree of $G$, 
and let $H=G-v$. If $d(v)=2$, then $|E(H)|\ge {n-1\choose2}-7$. Let $N_G(v)=\{v_1,v_2\}$. We know that $H$ contains a 2-connected 
bipartite spanning subgraph with $v_1,v_2$ in the same part by Lemma \ref{lem5.1}. Clearly, $G$ contains a 2-connected bipartite 
spanning subgraph. Otherwise, $3\le d(v)\le n-2$, then $|E(H)|\ge {n-1\choose2}-5-(n-2)={(n-1)-1\choose2}-5$ and $\delta(H)\ge 2$. 
If $H$ has a cut-vertex $u$, then each connected component of $H-u$ contains at least $2$ vertices. We have that $|E(H)|\le {n-3\choose2}+3<{n-2\choose2}-5$, 
a contradiction. Hence, $H$ is 2-connected. By the induction hypothesis, we know that $H$ contains a 2-connected bipartite spanning subgraph $B_H[X,Y]$. Since $d(v)\ge 3$, at least one of  $X$ and $Y$ contains at least $2$ neighbors of $v$. Hence, $G$ contains a 2-connected bipartite spanning subgraph.
\end{proof}

\begin{theorem}\label{th5.4}
Let $G$ be a connected graph of order $n\ge 14$. If ${n-3\choose 2}+4\le |E(G)|\le {n\choose 2}-1$, then $pc(G)=2$.
\end{theorem}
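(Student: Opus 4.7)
The lower bound $pc(G)\geq 2$ is immediate because $|E(G)|\leq \binom{n}{2}-1$ rules out $G=K_n$. For the upper bound, my plan is to exhibit a 2-connected bipartite spanning subgraph $H\subseteq G$: Lemma \ref{lem2.3} then gives $pc(H)=2$, and Lemma \ref{lem2.1} transfers this to $pc(G)\leq pc(H)=2$. I would proceed by induction on $n$, taking $n=14$ as the base case. Note that $|E(\bar G)|\leq \binom{n}{2}-\binom{n-3}{2}-4 = 3n-10$, which for $n=14$ is only $32$; this severely constrains the complement and makes the base case amenable to direct structural verification.

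For the inductive step, let $v$ be a vertex of minimum degree in $G$. If $\delta(G)\geq n-3$, then $G$ is 2-connected: any putative cut-vertex $u$ would force every component of $G-u$ to contain at least $n-3$ non-$u$ vertices (each such vertex keeps all its $\geq n-3$ neighbours in its own component), so the components would sum to at least $2(n-3)$, contradicting $|V(G-u)|=n-1$ for $n\geq 14$. Moreover $|E(G)|\geq n(n-3)/2 = \binom{n-1}{2}-1\geq \binom{n-1}{2}-5$, so Lemma \ref{lem5.2} directly produces the desired 2-connected bipartite spanning subgraph.

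If instead $\delta(G)\leq n-4$, remove $v$ to form $G' = G - v$. A short calculation gives $|E(G')|\geq \binom{n-3}{2}+4-(n-4) = \binom{(n-1)-3}{2}+4$, so $G'$ meets the edge-count hypothesis on $n-1$ vertices. When $G'$ is connected, non-complete, and $d_G(v)\geq 3$, the induction hypothesis applied to $G'$ furnishes a 2-connected bipartite spanning subgraph $B_{G'}[X,Y]$; pigeonhole places at least two neighbours of $v$ on one side (say $Y$), and adjoining $v$ to the opposite part with these edges extends 2-connectivity to $G$ (checked by deleting each vertex of the enlarged bipartite graph in turn).

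The remaining boundary cases require individual treatment: $d_G(v)\in\{1,2\}$, $v$ being a cut-vertex of $G$ (i.e.\ $G'$ disconnected), or $G'=K_{n-1}$. Pendants ($d_G(v)=1$) are absorbed by Lemma \ref{lem3.2}; a cut-vertex $v$ is handled by splitting $G$ along $v$ and recombining via Lemma \ref{le3.3} adapted from cut-edges to cut-vertices; the case $G' = K_{n-1}$ is a direct construction since $G$ is then $K_{n-1}$ with only a few extra edges; finally, for $d_G(v)=2$ one should aim to place the two neighbours $u_1,u_2$ of $v$ on a common side of the bipartition of $G'$ furnished by induction, using a swap argument based on the abundance of edges in $G'$ (the spirit of Lemma \ref{lem5.1}, though its hypothesis does not literally apply at this edge density). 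Securing the $d_G(v)=2$ sub-case cleanly, together with the base case $n=14$, will be the principal technical obstacles.
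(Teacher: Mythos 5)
Your core engine (produce a dense $2$-connected piece, apply Lemma \ref{lem5.2} or Lemma \ref{lem5.1} to get a $2$-connected bipartite spanning subgraph, and conclude via Lemma \ref{lem2.3} and Lemma \ref{lem2.1}) is the same as the paper's, but your overall architecture has genuine gaps exactly where $G$ fails to be $2$-connected, which the hypothesis permits. First, your induction hypothesis is the statement $pc(G')=2$, yet in the inductive step you use ``the $2$-connected bipartite spanning subgraph of $G'$ furnished by induction''---that is a strictly stronger statement, and it is simply false for admissible $G'$ with a cut vertex or a pendant vertex (e.g.\ $K_{n-2}$ plus a pendant vertex satisfies the edge bound but has no $2$-connected spanning subgraph at all). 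So the induction does not close as stated. Second, the tools you invoke for the boundary cases do not deliver the bound $2$: Lemma \ref{lem3.2} only yields $pc(G)\le\max\{3,|PV(G)|\}$, i.e.\ at best $3$; and the proposed ``cut-vertex adaptation'' of Lemma \ref{le3.3} is false---$pc(G)=\max\{pc(G_1),pc(G_2)\}$ already fails for two edges glued at a cut vertex, where each piece has $pc=1$ but the path has $pc=2$, because gluing proper paths at a vertex requires control of start/end colors, i.e.\ the strong property. Third, the base case $n=14$, with $|E(\overline G)|\le 32$ on $14$ vertices, is not a finite check one can wave away.

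The paper avoids induction on $n$ entirely. It disposes of the $3$-connected case first, proves $\delta(G)\le 5$ and then $\delta(G-v)\le 4$ by counting edges against the connectivity-at-most-$2$ structure, peels off at most three successive minimum-degree or pendant vertices $v,u,w$ until the remaining graph $F$ or $F'$ is dense enough for Lemma \ref{lem5.2} or Lemma \ref{lem5.1}, and then reattaches the removed vertices using the strong property via Lemma \ref{lem2.5} and Corollary \ref{cor2.6}. That reattachment machinery---which preserves the number of colors when adding back up to two vertices or pendant paths---is precisely what your proposal is missing, and without it the low-connectivity cases cannot be closed with two colors.
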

\begin{proof}
The result clearly holds if $G$ is $3$-connected. We only consider of the graphs with connectivity at most $2$.

\noindent\textbf{Claim 1:} $\delta(G)\le 5$.

\noindent\emph{Proof of Claim 1:} Suppose to the contrary that $\delta(G)\ge 6$.
If $G$ has a cut-vertex, say $x$, then each connected component of $G-x$ has at least $6$ vertices. Hence, we have 
that $|E(G)|\le {n-6\choose 2}+{7\choose 2}$, which is less than ${n-3\choose 2}+4$ when $n\ge 14$, a contradiction. 
If $G$ is 2-connected with a 2-vertex cut $\{x,y\}$, then each connected component of $G-x-y$ has at least $5$ vertices. 
We have that $|E(G)|\le {n-5\choose 2}+{7\choose 2}-1$, which is less than ${n-3\choose 2}+4$ when $n\ge 14$. 
We can also get a contradiction. Hence, we get our conclusion $\delta(G)\le 5$.

Let $v$ be a vertex with the minimum degree in $G$, and let $H=G-v$. Then $|V(H)|=n-1$ and $|E(H)|\ge {n-3\choose 2}+4-5={n-3\choose 2}-1$.

Note that if $H$ is $3$-connected, one can get that $pc(H)\leq 2$. Then by Lemma \ref{lem2.5}, one has that $pc(G)\leq 2$. 
So, we only consider the case that the connectivity of $H$ is at most $2$.

\noindent\textbf{Claim 2:} $\delta(H)\le 4$.

\noindent\emph{Proof of Claim 2:} We use the same method as in the proof of Claim 1. Suppose that $\delta(H)\ge 5$.
If $H$ has a cut-vertex, say $x$, then each connected component of $H-x$ has at least $5$ vertices. Hence, we have 
that $|E(H)|\le {n-6\choose 2}+{6\choose 2}$, which is less than ${n-3\choose 2}-1$ when $n\ge 14$, a contradiction. 
If $H$ is 2-connected with a 2-vertex cut $\{x,y\}$, then each connected component of $H-x-y$ has at least $4$ vertices. 
Hence, we have that $|E(H)|\le {n-5\choose 2}+{6\choose 2}-1$, which is less than ${n-3\choose 2}-1$ when $n\ge 14$. 
Hence we get our conclusion that $\delta(H)\le 4$.

Let $u$ be a vertex with the minimum degree in $H$, and let $F=H-u=G-v-u$. Then $|V(F)|=n-2$ and 
$|E(F)|\ge {n-3\choose 2}-5={(n-2)-1\choose 2}-5$. If $F$ is 2-connected, we know that $F$ contains a bipartite $2$-connected  
spanning subgraph by Lemma \ref{lem5.2}, and hence $pc(H)\leq 2$. By Lemma \ref{lem2.5}, we have that $pc(G)\le 2$. So, we only 
need to consider the case that $F$ has cut-vertices. As in the proof of Lemma \ref{lem5.2}, we know that $F$ has a pendent 
vertex $w$, and so $\delta(G)\le d_G(w)\le 3$. Let $F'=F-w=G-u-v-w$, then $|E(F')|\ge {n-3\choose 2}-6$. From Lemma \ref{lem5.1}, 
we know that $F'$ contains a 2-connected bipartite spanning subgraph, and so $pc(F')\leq 2$. If $d_G(w)=1$, then $u$ and $v$ are 
also pendent vertices in $G$. We have that $|E(G)|\le {n-3\choose 2}+3$, which contradicts the fact that $|E(G)|\ge {n-3\choose 2}+4$.
Thus, $d(w)\ge 2$. If $uv\in E(G)$, one can see that $pc(G)=2$ by Corollary \ref{cor2.6}. If $uv\not\in E(G)$, we have that $u$ has 
a neighbor in $F'$. Since otherwise, $d(u)=1$ and $d(v)=1$, $|E(G)|\leq {n-3\choose2}$+3, a contradiction. So, we know that either 
$v$ has a neighbor in $F'$ or $wv\in E(G)$. By Corollary \ref{cor2.6}, we have that $pc(G)=2$. The proof is thus complete.
\end{proof}
\begin{theorem}
Let $G$ be a connected graph of order $n\ge 15$. If $|E(G)|\ge {n-4\choose 2}+5$, then $pc(G)\le 3$.
\end{theorem}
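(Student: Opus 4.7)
The plan is to reduce the claim to Theorem~\ref{th3.4}, which states $pc(G)\le\max\{3,\Delta(G^*)\}$ for any connected graph $G$, where $G^*$ is the tree obtained by contracting each maximal bridgeless subgraph of $G$. It therefore suffices to prove that the hypothesis $|E(G)|\ge\binom{n-4}{2}+5$ forces $\Delta(G^*)\le 3$, which I will establish by a direct edge-counting contradiction. The one subtlety to keep in mind is that a node $A$ of $G^*$ may be either a singleton vertex of $G$ or a bridgeless connected subgraph of $G$ with $|A|=a\ge 2$ vertices.

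Suppose for contradiction that some node $A\in V(G^*)$ has $\deg_{G^*}(A)=d\ge 4$, with $|A|=a\ge 1$. The $d$ bridges at $A$ lead to $d$ pairwise disjoint subtrees of $G^*-A$; let $n_1,\ldots,n_d\ge 1$ count the vertices of $G$ contained in these subtrees, so $a+n_1+\cdots+n_d=n$. Every edge of $G$ either lies inside $A$ (at most $\binom{a}{2}$ edges, since $A$ spans a bridgeless subgraph on $a$ vertices), lies inside one of the $d$ sides (at most $\binom{n_i}{2}$ on the $i$-th side), or is one of the $d$ bridges incident to $A$, so
\[
|E(G)|\ \le\ \binom{a}{2}+\sum_{i=1}^{d}\binom{n_i}{2}+d.
\]

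By convexity, $\sum_i\binom{n_i}{2}$ subject to $\sum_i n_i=n-a$ and $n_i\ge 1$ is maximized at the concentrated vector $(n-a-d+1,1,\ldots,1)$, giving $\binom{n-a-d+1}{2}$. The resulting expression $\binom{a}{2}+\binom{n-a-d+1}{2}$ is convex in $a\in\{1,\ldots,n-d\}$, hence attains its maximum at the endpoints $a=1$ and $a=n-d$, both of which evaluate to $\binom{n-d}{2}$. Therefore $|E(G)|\le\binom{n-d}{2}+d$. For $d=4$ this is exactly $\binom{n-4}{2}+4<\binom{n-4}{2}+5$; for $d\ge 5$ the bound is even smaller (using $n\ge 15$). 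Either case contradicts the hypothesis, so $\Delta(G^*)\le 3$ and Theorem~\ref{th3.4} yields $pc(G)\le 3$. The whole argument is a clean two-stage convexity estimate feeding into Theorem~\ref{th3.4}, and I do not anticipate a genuine obstacle; the only point requiring a moment of care is confirming that the non-singleton case $a\ge 2$ still gives the desired bound, which it does because $\binom{a}{2}+\binom{n-a-d+1}{2}$ is maximized exactly at the extreme $a\in\{1,n-d\}$.
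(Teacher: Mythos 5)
Your argument is correct, and it takes a genuinely different route from the paper's. The paper splits into cases by hand: bridgeless graphs are handled by Theorem~\ref{th2.1}; if there is a pendent vertex it is deleted and Theorem~\ref{th5.4} (the density condition for $pc=2$, which itself rests on Lemmas~\ref{lem5.1} and~\ref{lem5.2}) is applied to the rest; and for a graph with a cut-edge but no pendent vertex the paper bounds the size of the smaller side of the cut-edge by $4$ and again invokes the $pc=2$ density machinery on the larger side. You instead feed the edge count directly into Theorem~\ref{th3.4} via a two-stage convexity estimate showing that a node of degree $d\ge 4$ in $G^*$ forces $|E(G)|\le\binom{n-d}{2}+d\le\binom{n-4}{2}+4$; your accounting of edges (inside $A$, inside each of the $d$ subtrees hanging off $A$, the $d$ bridges at $A$, and none between distinct subtrees since $G^*$ is a tree) is accurate, and both extremal cases $a=1$ and $a=n-d$ do evaluate to $\binom{n-d}{2}$. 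What your approach buys is a shorter, self-contained proof that avoids the entire Section~5 apparatus for $pc=2$ and, as a bonus, works for all $n$ well below $15$ (the hypothesis $n\ge 15$ is only needed in the paper because of its reliance on Theorem~\ref{th5.4}); what it gives up is the finer structural information the paper's route extracts along the way, namely that large pieces of such dense graphs actually have proper connection number $2$.
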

\begin{proof}
If $G$ is $2$-edge connected, then $pc(G)\le 3$ clearly holds from Lemma \ref{th2.1}. If $\delta (G)=1$, let $H=G-v$, where $v$ is a pendent vertex. 
Then, $H$ has $n-1$ vertices and $|E(H)|\ge {(n-1)-3\choose 2}+4$. From Theorem \ref{th5.4}, we know that $pc(H)=2$, and so $pc(G)\le 3$. 
In the following, we only consider the graphs with cut-edges and without pendent vertices. Let $e$ be a cut-edge of $G$, and let $G_1, G_2$ be the 
two connected components of $G-e$ with $|V(G_1)|\le |V(G_2)|$. If $|V(G_1)|\ge 5$, we know that $E(G)\le {n-5\choose 2}+11<{n-4\choose 2}+5$, a contradiction. 
So, we know that $|V(G_1)|\le 4$. Since $G$ has no pendent vertices, we know that $|V(G_1)|\ge 3$. Hence, $G_1$ has three or four vertices with at most 
one pendent vertex in $G_1$. It can be easily checked that $pc(G_1)\le 2$. We claim that $pc(G_2)\le 2$. In fact, if $|V(G_1)|=3$, then $G_2$ has $n-3$ vertices 
and $|E(G_2)|\ge {n-4\choose 2}+5-4={(n-3)-1\choose 2}+1$. If $|V(G_1)|=4$, then $G_2$ has $n-4$ vertices and $|E(G_2)|\ge {n-4\choose 2}+5-7={n-4\choose 2}-2$. 
In both cases, we know that $pc(G_2)\le 2$. Consequently, we can easily get that $pc(G)\le 3$.
\end{proof}

\end{document}